\documentclass{amsart}
\usepackage[breaklinks]{hyperref}
\usepackage{fullpage}

\newcommand{\R}{\mathbb{R}}
\newcommand{\C}{\mathbb{C}}
\newcommand{\Z}{\mathbb{Z}}
\newcommand{\N}{\mathbb{N}}
\newcommand{\J}{\mathcal{J}}
\newcommand{\X}{X_{\Ha}}
\newcommand{\wind}{\operatorname{wind}}
\newcommand{\A}{\mathbf{A}}
\newcommand{\crit}{\operatorname{crit}}
\newcommand{\sign}{\operatorname{sign}}
\newcommand{\F}{\mathcal{F}}
\newcommand{\ind}{\operatorname{ind}}
\newcommand{\Ha}{\mathcal{H}}
\newcommand{\tl}{\tilde}

\newcommand{\End}{\operatorname{End}}

\def\br#1{\left\{#1\right\}}
\def\bp#1{\left(#1\right)}

\def\fl#1{\lfloor#1\rfloor}
\def\ceil#1{\lceil#1\rceil}
\def\abs#1{\left|#1\right|}

\newtheorem{lemma}{Lemma}[section]
\newtheorem{theorem}[lemma]{Theorem}
\newtheorem{corollary}[lemma]{Corollary}

\theoremstyle{definition}

\newtheorem*{ack}{Acknowledgements}

\newtheorem{assumptions}[lemma]{Assumptions}

\title[Holomorphic curves and holomorphic hyperplane foliations]{Holomorphic curves in the presence of holomorphic hypersurface foliations}
\author[A.\ Moreno]{Agustin Moreno}
\address{Institut f\"ur Mathematik \\ Universit\"at Augsburg  
\\  86159 Augsburg \\ Germany}
\email{\href{mailto:agustin.moreno@math.uni-augsburg.de}{agustin.moreno@math.uni-augsburg.de}}
\author[R.\ Siefring]{Richard Siefring}
\address{Fakult\"at f\"ur Mathematik \\ Ruhr-Universit\"at Bochum \\
     44780 Bochum \\
    Germany}
\email{\href{mailto:richard.siefring@ruhr-uni-bochum.de}{richard.siefring@ruhr-uni-bochum.de}}
\urladdr{\url{http://homepage.ruhr-uni-bochum.de/richard.siefring}}
\thanks{R.\ Siefring received support from DFG grant BR 5251/1-1 during the completion of this project.}
\date{February 7, 2019}

\begin{document}
\begin{abstract}
We prove a result which establishes restrictions on the pseudoholomorphic curves
which can exist in
a stable Hamiltonian manifold in the presence of certain
$\R$-invariant foliations of the symplectization by holomorphic hypersurfaces.
This result has applications in the first author's work \cite{Moreno:thesis, Moreno:torsion}
on algebraic torsion in higher dimensional contact manifolds.
\end{abstract}

\maketitle
\tableofcontents

\section{Background and main result}
Our main results here are motivated by the study of algebraic torsion in contact manifolds \cite{Moreno:thesis,Moreno:torsion}
and concern finding restrictions on the existence of pseudoholomorphic curves
in certain manifolds equipped with stable Hamiltonian structures.
The manifolds we consider will be smooth fibrations over a closed, oriented surface and
we will assume further that the symplectization admits an $\R$-invariant foliation by pseudoholomorphic
hypersurfaces which project to gradient flow lines of a Morse function on the surface.
We are interested in identifying conditions which will guarantee that a punctured pseudoholomorphic curve
is contained in the image of a leaf of a foliation.
Before stating the main results we give some definitions.

Let $M^{2n+1}$ be a closed, orientable manifold.  A pair
$\Ha=(\lambda, \omega)\in\Omega^{1}(M)\times\Omega^{2}(M)$ is said to be a
\emph{stable Hamiltonian structure} on $M$ if
\begin{itemize}
\item $\lambda\wedge \omega^{n}$ is a volume form on $M$,
\item $d\omega=0$, and
\item $d\lambda$ vanishes on the kernel of the map $v\mapsto i_{v}\omega$.
\end{itemize}
A stable Hamiltonian structure on $M$ determines a splitting
\[
TM=\R\X\oplus (\xi, \omega|_{\xi})
\]
of the tangent space of $M$ into a symplectic hyperplane distribution
$(\xi=\ker\lambda, \omega|_{\xi})$ and a line bundle determined by the span of the 
\emph{Reeb vector field} $\X$, which is the unique vector field satisfying
\[
\lambda(\X)=1 \qquad\text{ and }\qquad i_{\X}\omega=0.
\]
We will refer to the triple $(M, \lambda, \omega)$ as a \emph{stable Hamiltonian manifold}.
A stable Hamiltonian structure $(\lambda, \omega)$ on $M$ is said to be \emph{nondegenerate} if
all periodic orbits of the Reeb vector field are nondegenerate.

A codimension-$2$ submanifold $V\subset M$ is said to be a
\emph{stable Hamiltonian hypersurface} of $M$ if
the pair $\Ha':=(\lambda', \omega')$ defined by
\[
\lambda':=i^{*}\lambda \qquad \omega':=i^{*}\omega,
\]
where $i:V\hookrightarrow M$ is the inclusion map, is a stable Hamiltonian structure on $V$.
In this case, the hyperplane distribution $\xi':=\ker\lambda'$ is naturally identified via
$i_{*}$ with $TV\cap\xi$.
We say a stable Hamiltonian hypersurface $V\subset M$ is a
\emph{strong stable Hamiltonian hypersurface}
if, in addition, $V$ is invariant under the flow of $\X$.
This is easily seen to be equivalent to requiring that the push forward
by $i_{*}$ of the Reeb vector field $X_{\Ha'}$ of the stable Hamiltonian structure
$\Ha'$ is equal to $\X$ at all points in $V$.
Along a strong stable Hamiltonian hypersurface $V\subset M$, we thus
obtain a splitting of the tangent space of $M$
\[
TM|_{V}=\R\X\oplus (\xi', \omega')\oplus(\xi_{V}^{\perp}, \omega|_{\xi_{V}^{\perp}})
\]
into a line bundle spanned by $\X$ and two symplectic vector bundles,
where
\begin{equation}\label{e:symp-comp-def}
\xi_{V}^{\perp}=\br{v\in \xi|_{V}\,\middle|\,\omega(v, i_{*}w)=0 \quad\forall w\in \xi'},
\end{equation}
is the symplectic complement of $\xi'\approx TV\cap\xi$ in $\xi|_{V}$.
Moreover,
since the flow of $\X$ preserves $\lambda$ and $\omega$,
it also preserves this splitting.  Therefore, given a periodic orbit
$\gamma$ of $\X$ lying in $V$ and a symplectic trivialization $\Phi$ of
$\gamma^{*}\xi_{V}^{\perp}$, we can assign a normal Conley--Zehnder index
$\mu_{N}^{\Phi}(\gamma)$ by considering the restriction of the linearized flow along $\gamma$
to the symplectic normal bundle
$\xi_{V}^{\perp}$ of $\xi'$ in $\xi$.
We describe this construction in more detail in Section \ref{s:intersections} below.

We consider a manifold $M^{2n+1}$
equipped with a nondegenerate stable Hamiltonian structure $\Ha=(\lambda, \omega)$,
and we will denote by the triple $(\Sigma, p, Y^{2n-1})$
a smooth fibration $p:M\to\Sigma$
over a closed, oriented surface $\Sigma$ with fiber diffeomorphic to $Y$.
Given a Morse function $f$ on $\Sigma$ we say the fibration
$(\Sigma, p, Y)$ is \emph{$f$-admissible}
if for each critical point
$w\in\crit(f)$ of the function, the fiber
$Y_{w}=p^{-1}(w)$ over $w$
is a strong stable Hamiltonian hypersurface.
We will denote $f$-admissible fibrations by quadruples
$(\Sigma, p, Y, f)$.

Given an $f$-admissible fibration $(\Sigma, p, Y, f)$ for $(M, \Ha)$ and a critical point
$w\in\crit(f)$, we note that at any point $y\in Y_{w}$,
the derivative
$p_{*}$ at $y$ determines a linear isomorphism
\[
p_{*}(y):(\xi_{Y_{w}}^{\perp})_{y}\to T\Sigma_{w}
\]
from the symplectic normal bundle of $Y_{w}$ in $M$ at $y\in Y_{w}$
to the tangent space of $\Sigma$ at $w$.
This map will either be orientation preserving at every point in $Y_{w}$ or orientation 
reversing at every point in $Y_{w}$.
This allows us to define a sign function
\[
\sign:\crit(f)\to\br{-1, 1}
\]
on the set of critical points of $f$ by requiring
\[
\sign(w)=
\begin{cases}
1 & \text{ if $p_{*}|_{\xi_{Y_{w}}^{\perp}}$ is everywhere orientation preserving} \\
-1 &\text{ if $p_{*}|_{\xi_{Y_{w}}^{\perp}}$ is everywhere orientation reversing}.
\end{cases}
\]
Choosing at each $w\in\crit(f)$ with $\sign(w)=1$ an 
orientation preserving linear map $\Phi_{w}:T\Sigma_{w}\to (\R^{2}, \omega_{0})$
and 
at each $w\in\crit(f)$ with $\sign(w)=-1$ an 
orientation reversing linear map $\Phi_{w}:T\Sigma_{w}\to (\R^{2}, \omega_{0})$
we obtain a global orientation-preserving trivialization
\[
\Phi_{w}\circ p_{*}|_{\xi_{Y_{w}}^{\perp}}\to(\R^{2}, \omega_{0})
\]
of $\xi_{Y_{w}}^{\perp}$ which can be homotoped to a symplectic trivialization.
Thus in an $f$-admissible fibration there is a preferred homotopy class of
symplectic trivialization of the symplectic normal bundle
to $p^{-1}(\crit (f))$.

Assuming still that $M$ is a stable Hamiltonian manifold, 
let $\tilde J$ be an almost complex structure on $\R\times M$ which is compatible with the stable Hamiltonian
structure $\Ha=(\lambda, \omega)$; that is,
$\tl J$ is an $\R$-invariant endomorphism of $T(\R\times M)$ which squares to negative the identity, and
with respect to the splitting
\[
T(\R\times M)\approx \R\partial_{a}\oplus TM\approx \R\partial_{a}\oplus\R\X\oplus\xi
\]
the action of $\tl J$ is given by
\begin{equation}\label{e:R-invariant-extension}
\tl J\partial_{a}=\X \qquad\text{ and }\qquad \tl J|_{\pi^{*}\xi}=\pi^{*}J
\end{equation}
where $\pi:\R\times M\to M$ is the canonical projection and $J\in\End(\xi)$ is a complex structure
on $\xi$ for which the bilinear form $\omega(\cdot, J\cdot)|_{\xi\times\xi}$ is symmetric and positive definite.
We say that a codimension-$2$ foliation $\F$ of $\R\times M$ is an
\emph{$\R$-invariant,  asymptotically cylindrical, $\tilde J$-holomorphic foliation} if:
\begin{itemize}
\item $\mathcal{F}$ is invariant under translations in the $\R$-coordinate,
\end{itemize}
and if there exists a strong stable Hamiltonian hypersurface $V\subset M$
so that:
\begin{itemize}
\item $\R\times V$ has $\tilde J$-invariant tangent space,

\item the union of leaves of $\mathcal{F}$ fixed by $\R$-translation
is equal to $\R\times V\subset\R\times M$, and

\item all other leaves of the foliation are
$\tilde J$-holomorphic hypersurfaces which are asymptotically cylindrical\footnote{
\emph{Asymptotically cylindrical} will be defined precisely in Section \ref{s:intersections}.
}
over some collection of
components of $V$,
and which project by $\pi$ to embedded submanifolds smoothly foliating $M\setminus V$.
\end{itemize}
We will refer to the stable Hamiltonian hypersurface $V$ as the
\emph{binding set} of the foliation $\F$.
For brevity, we will from now on refer to $\R$-invariant, asymptotically cylindrical, $\tilde J$-holomorphic
foliations simply as \emph{holomorphic foliations} or \emph{$\tilde J$-holomorphic foliations} when we wish to
specify the almost complex structure.

Now assuming the manifold $M$ is equipped with both
a holomorphic foliation 
$\F$ with binding $V$ and an $f$-admissible fibration $(\Sigma, p, Y, f)$,
we will say that \emph{$\F$ is compatible with $(\Sigma, p, Y, f)$}
if:
\begin{itemize}
\item $p^{-1}(\crit(f))$ is equal to the binding $V$ of the foliation,

\item all other leaves of the foliation are diffeomorphic to $\R\times Y$ and admit smooth parametrizations of
the form
\[
(s, y)\in \R\times Y\mapsto (a(s, y), m(s, y))\in\R\times M
\]
where $p(m(s, y))=\gamma(s)$ for some solution $\gamma$ to the gradient flow equation 
\[
\dot\gamma(s)=\nabla f(\gamma(s))
\]
with respect to an appropriate metric $g_{\Sigma}$ on $\Sigma$, and where
$a(s, y)$ satisfies
\[
\lim_{s\to\pm\infty}a(s, y)=\sign(\lim_{s\to\pm\infty}p(m(s, y)))\pm\infty.
\]
\end{itemize}

We are interested in understanding the punctured pseudoholomorphic curves in 
$\R\times M$ when $M$ is equipped with a holomorphic foliation compatible with an $f$-admissible fibration
and all asymptotic limits of the given curve lie in the binding of the foliation.
We recall that a \emph{asymptotically cylindrical, punctured pseudoholomorphic map} 
is a quadruple $(S, j, \Gamma, \tl u=(a, u))$ where
\begin{itemize}
\item $(S, j)$ is a closed Riemann surface

\item $\Gamma\subset S$ is a finite set, called the set of \emph{punctures}

\item $\tl u=(a, u):S\setminus\Gamma\to\R\times M$ is a $\tl J$-holomorphic map, i.e.\ 
$\tl u$ satisfies the equation
\[
d\tl u\circ j=\tl J(\tl u)\circ d\tl u,
\]
and

\item For each puncture $z\in\Gamma$ there exists a periodic orbit $\gamma_{z}^{m_{z}}$
so that the map $\tl u$ is asymptotic near $z$ to a half-cylinder of the form $\R^{+}\times\gamma_{z}^{m_{z}}$ or
$\R^{-}\times\gamma_{z}^{m_{z}}$.  Here $\gamma_{z}$ is a simple periodic orbit of $\X$ and
$\gamma_{z}^{m_{z}}$ for $m_{z}\in\N$ denotes the $m_{z}$-fold cover of the simple orbit $\gamma_{z}$.
\end{itemize}
We will use the term \emph{pseudoholomorphic curve} to refer to
an equivalence class $C=[S, j, \Gamma, \tl u=(a, u)]$ of such maps
under the equivalence relation of holomorphic reparametrization of the domain.

We consider now an asymptotically cylindrical, punctured pseudoholomorphic map,
$(S, j, \Gamma, \tl u=(a, u))$ in a stable Hamiltonian manifold $(M, \Ha)$
equipped with an $f$-admissible fibration $(\Sigma, p, Y, f)$, and
we assume that all asymptotic limits $\gamma_{z}^{m_{z}}$ of the map $\tl u$ lie in
the strong stable Hamiltonian hypersurface
$p^{-1}(\crit(f))$.
In this case, the projection of the map $p\circ u:S\setminus\Gamma\to\Sigma$ admits
a continuous extension $\bar v:S\to\Sigma$ over the punctures.
Our main theorem puts restrictions on the degree of this map in the presence of a holomorphic foliation
and 
under some assumptions on the normal Conley--Zehnder indices of the asymptotic limits
and tells us that, under these assumptions, 
the image of the pseudoholomorphic map $\tl u$ is contained in a leaf of the foliation precisely when the
degree of this map is zero.
\begin{theorem}\label{t:main}
Let $(M, \lambda, \omega)$ be a nondegenerate stable Hamiltonian manifold
equipped with a compatible almost complex structure $\tilde J$,
an $f$-admissible fibration $(\Sigma, p, Y, f)$ and a $\tilde J$-holomorphic foliation
$\F$ compatible with $(\Sigma, p, Y, f)$.
Let $(S, j, \Gamma, \tilde u=(a, u))$ be an asymptotically cylindrical $\tl J$-holomorphic map
with all asymptotic limits contained in $p^{-1}(\crit(f))$ and let $\bar v:S\to\Sigma$ denote the continuous
extension of the map $p\circ u:S\setminus\Gamma\to\Sigma$ over the punctures.
Assume moreover that 
\begin{equation}\label{e:cz-condition}
\mu_N^{\Phi}(\gamma_{z}^{m_{z}})\in\br{-1, 0, 1}
\end{equation}
for every $z\in\Gamma$, where $\Phi$ is the homotopy class of symplectic trivialization of
the symplectic normal bundle to $p^{-1}(\crit(f))$ determined by the fibration.
Then:
\begin{itemize}
\item If $\sign(w)=1$ for all $w\in\crit(f)$ then the map $\bar v$ has nonnegative degree and
has degree zero
precisely when the image of the map $\tilde u$ is contained in a leaf of the foliation $\F$.

\item If $\sign(w)=-1$ for all $w\in\crit(f)$ then the map $\bar v$ has nonpositive degree and
has degree zero
precisely when the image of the map $\tilde u$ is contained in a leaf of the foliation $\F$.

\item If the map $\sign:\crit(f)\to\br{-1, 1}$ is surjective then the map $\bar v$ has degree zero and
the image of the map $\tilde u$ is contained in a leaf of the foliation $\F$.
\end{itemize}
\end{theorem}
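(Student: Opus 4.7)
The strategy is to compute $\deg(\bar v)$ as a homological intersection number of $\tl u$ with a generic leaf $\tl L$ of the foliation $\F$, then use positivity of intersections between $\tl J$-holomorphic objects together with a vanishing of asymptotic intersection contributions forced by the Conley--Zehnder hypothesis. Specifically, pick a regular value $\sigma\in\Sigma\setminus\crit(f)$ of $\bar v$, lying on a unique gradient trajectory $\gamma$ of $f$ with endpoints in $\crit(f)$, and take a non-binding leaf $\tl L$ of $\F$ projecting to $\gamma$. Using the compatibility formula $p(m(s,y))=\gamma(s)$ together with the splitting $TM|_V=\R\X\oplus\xi'\oplus\xi_V^\perp$ and the definition of $\sign(w)$, one checks that at each transverse intersection $z\in\tl u\cap\tl L$ the local orientation sign in $\R\times M$ agrees with $\sign(\det d\bar v(z))$ up to the overall factor $\sign(w)$, where $w$ is the critical point at the appropriate end of $\gamma$. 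This is the linear-algebraic identification between ``intersection signs with a leaf'' and ``degree contributions of $\bar v$''.

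Next, I would invoke the Siefring-type intersection theory for asymptotically cylindrical curves and hypersurfaces (to be developed in Section~\ref{s:intersections}) to produce a homotopy-invariant algebraic intersection number $\tl u*\tl L$ with the decomposition
\[
\tl u*\tl L = \tl u\cdot\tl L + \delta_\infty(\tl u,\tl L),
\]
where $\tl u\cdot\tl L$ is the signed count of transverse intersections and $\delta_\infty\geq 0$ collects asymptotic contributions at shared asymptotic limits in $V=p^{-1}(\crit(f))$, bounded in terms of relative winding numbers of the normal asymptotic operator against the preferred trivialization $\Phi$. The hypothesis $\mu_N^\Phi(\gamma_z^{m_z})\in\br{-1,0,1}$ is precisely what forces the extremal winding numbers of the normal asymptotic operator at each puncture to reach their tight bound, so that $\delta_\infty(\tl u,\tl L)=0$ and $\tl u*\tl L=\tl u\cdot\tl L$. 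Homotopy invariance allows translating $\tl L$ in the $\R$-direction so that all intersections with $\tl u$ are transverse, and positivity of intersections for two $\tl J$-holomorphic submanifolds gives $\tl u\cdot\tl L\geq 0$. Combined with the identification from the first step, $\sign(w)\deg(\bar v)\geq 0$, which yields the first two bullets of the theorem, and the third by imposing both inequalities simultaneously.

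For the zero-degree case, $\deg(\bar v)=0$ forces $\tl u\cdot\tl L=0$ for every non-binding leaf, and positivity then implies $\tl u$ has no transverse intersection with any leaf of $\F$; standard foliation arguments confine the image of $\tl u$ to a single leaf. The converse is immediate from the compatibility formula, since containment in a single leaf forces $\bar v$ to take values in a single gradient trajectory and so have degree zero. I expect the main obstacle to be the asymptotic intersection analysis, namely identifying $\delta_\infty(\tl u,\tl L)$ with explicit extremal winding defects of the normal asymptotic operator along the $\gamma_z^{m_z}$ and verifying that $\mu_N^\Phi\in\br{-1,0,1}$ is exactly the threshold at which these defects vanish; this requires adapting Siefring's intersection theory to the setting of a pseudoholomorphic curve versus a leaf of a holomorphic hypersurface foliation, and carefully tracking the orientation conventions encoded by $\sign(w)$ and the choice of $\Phi$.
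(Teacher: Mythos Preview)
Your overall strategy---relate $\deg(\bar v)$ to a holomorphic intersection number and invoke positivity---is the right one, but the choice of hypersurface and the role of the Conley--Zehnder hypothesis are both off, and the argument as written does not go through.

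\textbf{The wrong hypersurface.} You intersect $\tl u$ with a non-binding leaf $\tl L$ lying over a full gradient trajectory $\gamma$. But $\tl u\cap\tl L$ does not compute $\deg(\bar v)$: a point $z$ lies in this intersection only when $u(z)\in\pi(\tl L)=p^{-1}(\gamma(\R))$ \emph{and} the $\R$-coordinate $a(z)$ matches the one prescribed by the leaf, so these intersections bear no direct relation to the fibre $\bar v^{-1}(\sigma)$ over a single regular value. What does compute the degree is intersection with the $\R$-invariant leaf $\R\times Y_{w}$ over a critical point $w$: the preimage $\bar v^{-1}(w)$ is exactly the union of the geometric intersections $u^{-1}(Y_{w})$ together with the punctures $z\in\Gamma$ whose limits lie in $Y_{w}$, and the local degree at a puncture is $\mp\wind_{rel}^{\Phi}((\tl u;z),\R\times Y_{w})$. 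This yields the exact identity (Lemma~\ref{l:degree-computation})
\[
\sign(w)\,\deg(\bar v)=\tl u\cdot(\R\times Y_{w})
-\sum_{\substack{z\in\Gamma^{+}\\ \gamma_{z}\subset Y_{w}}}\wind_{rel}^{\Phi}
+\sum_{\substack{z\in\Gamma^{-}\\ \gamma_{z}\subset Y_{w}}}\wind_{rel}^{\Phi}.
\]

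\textbf{The role of the CZ hypothesis.} Your claim that $\mu_{N}^{\Phi}\in\{-1,0,1\}$ forces $\delta_{\infty}(\tl u,\tl L)=0$ is false: at a positive puncture with $\mu_{N}^{\Phi}=1$ one has $\lfloor 1/2\rfloor=0$, but $\wind_{rel}^{\Phi}$ can be any nonpositive integer, so $\delta_{\infty}$ can be arbitrarily large. What the hypothesis actually buys is that $\alpha_{N}^{\Phi;-}=\lfloor\mu_{N}^{\Phi}/2\rfloor\le 0$ and $\alpha_{N}^{\Phi;+}=\lceil\mu_{N}^{\Phi}/2\rceil\ge 0$. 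Combining the winding bounds of Corollary~\ref{c:normal-rel-winding} with positivity of $\tl u\cdot(\R\times Y_{w})$ in the displayed identity then gives $\sign(w)\deg(\bar v)\ge 0$, with equality precisely when $u$ misses $Y_{w}$ and every $\wind_{rel}^{\Phi}$ is extremal (hence zero, by the CZ hypothesis). The non-binding leaves enter only at the final step: once $\deg(\bar v)=0$ forces $u\cap Y_{w}=\emptyset$ and $\wind_{rel}^{\Phi}=0$ for every $w\in\crit(f)$, one applies Corollary~\ref{c:nonintersection-conditions} to any non-binding leaf $\tl Y$ meeting $\tl u$ and concludes (by contradiction) that $\tl u\subset\tl Y$.
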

The proof of our main result here relies heavily on some results from the in-preparation work
\cite{Siefring:highdim} which
generalizes some of the results from the $4$-dimensional intersection theory studied
in \cite{Siefring:2011} to higher dimensions.  We will summarize the relevant results in the following section
and then apply these results in Section \ref{s:main-proof} below to prove the main the result.

As an immediate corollary of the above result we have the following.

\begin{corollary}\label{c:main}
With $(M, \lambda, \omega)$, $\tilde J$, $(\Sigma, p, Y, f)$, $\F$, and $(S, j, \Gamma, \tilde u=(a, u))$
satisfying all the assumptions of Theorem \ref{t:main} above, assume that
$g(S)<g(\Sigma)$.
Then the image of the map $\tl u$ is contained in a leaf of the foliation $\F$.
\end{corollary}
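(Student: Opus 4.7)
The plan is to reduce the corollary directly to Theorem \ref{t:main} together with a standard fact from the topology of closed oriented surfaces: any continuous map between closed oriented surfaces which has nonzero degree cannot strictly decrease genus. Concretely, Theorem \ref{t:main} splits into three cases according to the behavior of $\sign:\crit(f)\to\br{-1,1}$, and in the third case (when $\sign$ is surjective) the conclusion is immediate without any further hypothesis on $S$. So it suffices to handle the cases where $\sign\equiv 1$ or $\sign\equiv -1$, and in each of these cases the theorem tells us that $\bar v$ has nonnegative (resp.\ nonpositive) degree and that $\tl u$ lies in a leaf precisely when $\deg\bar v=0$.

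The remaining task is therefore to show that $\deg\bar v=0$. I would observe that $(S,j)$ is a closed Riemann surface, hence a closed oriented surface, $\Sigma$ is closed and oriented by assumption, and $\bar v:S\to\Sigma$ is continuous, so $\deg\bar v\in\Z$ is well defined via $\bar v_{*}[S]=(\deg\bar v)[\Sigma]$ in $H_{2}$. Then I would argue by contrapositive: if $\deg\bar v\neq 0$, then $\bar v^{*}$ is injective on $H^{2}(\Sigma;\mathbb{Q})$, and by Poincar\'e duality combined with the fact that $\bar v^{*}$ respects the cup product, $\bar v^{*}:H^{1}(\Sigma;\mathbb{Q})\to H^{1}(S;\mathbb{Q})$ is also injective (given a nonzero $\alpha\in H^{1}(\Sigma;\mathbb{Q})$, pick $\beta$ with $\alpha\cup\beta\neq 0$; then $\bar v^{*}\alpha\cup \bar v^{*}\beta=\bar v^{*}(\alpha\cup\beta)\neq 0$, so $\bar v^{*}\alpha\neq 0$). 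This gives $2g(S)=\dim H^{1}(S;\mathbb{Q})\geq \dim H^{1}(\Sigma;\mathbb{Q})=2g(\Sigma)$, contradicting the hypothesis $g(S)<g(\Sigma)$.

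Having thus forced $\deg\bar v=0$, the corresponding clause of Theorem \ref{t:main} yields that the image of $\tl u$ is contained in a leaf of $\F$, completing the proof. There is no real obstacle here: the entire argument is a one-line topological observation plugged into Theorem \ref{t:main}, and the only small point to take care of is verifying that the continuous extension $\bar v$ is legitimately a continuous map of closed oriented surfaces so that the degree and the surface-domination inequality apply as stated.
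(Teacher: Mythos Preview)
Your proposal is correct and follows essentially the same approach as the paper: both reduce to Theorem~\ref{t:main} after using the nondegeneracy of the cup product on $H^{1}$ of a closed oriented surface to show that a nonzero-degree map $\bar v:S\to\Sigma$ would force $g(S)\ge g(\Sigma)$. The only cosmetic difference is that you explicitly split into the three cases of Theorem~\ref{t:main} (noting the surjective-$\sign$ case needs no genus hypothesis), whereas the paper applies the degree-zero conclusion uniformly.
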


\begin{proof}
As observed before the statement of Theorem \ref{t:main}, the assumption that all punctures of
$u:S\setminus\Gamma\to M$
are contained in the binding set of the foliation $V=p^{-1}(\crit(f))$ implies that the
map $p\circ u:S\setminus\Gamma\to\Sigma$ has a continuous extension
$\bar v:S\to\Sigma$.  A well-known argument
from algebraic topology
then implies that
the degree of the map $\bar v$ must be zero.
Indeed,
since for a closed, oriented surface the cup product pairing
\[
H^{1}(\Sigma)\times H^{1}(\Sigma)\stackrel{\cup}{\longrightarrow}H^{2}(\Sigma)
\]
is nondegenerate, a map
$\bar v:S\to\Sigma$ with nonzero degree induces an injection
\[
\bar v^{*}:H^{1}(\Sigma)\approx \Z^{2g(\Sigma)}\to H^{1}(S)\approx \Z^{2g(S)}
\]
which is impossible unless $g(\Sigma)\le g(S)$.
Since we assume that $g(S)< g(\Sigma)$ we conclude that $\bar v$ has degree zero.

Given that $\det(\bar v)=0$, it follows immediately from Theorem \ref{t:main} that
the image of $\tl u$ is contained in a leaf of the foliation.
\end{proof}

These results have applications in work of the first author on algebraic torsion in contact manifolds.
In \cite{Moreno:thesis, Moreno:torsion}, the author introduces a higher-dimensional generalization of the notion of a
\emph{spinal open book decomposition} (SOBD), as defined in \cite{LisiVHMWendl}
for dimension $3$.
This geometric structure supports a unique isotopy class of contact structures in the spirit of Giroux \cite{Giroux:2002}
and contains a fibration over a contact manifold with Liouville fibers (the ``pages'').
Given a SOBD supporting a contact structure, it induces holomorphic foliations in the symplectization of the contact
manifold lifting the pages of the SOBD, generalizing the construction of a \emph{holomorphic open book}
as e.g.\ in \cite{AbbasCieliebakHofer:2005,Wendl:2010ob}. In the case where the leaves of the foliation are codimension-2
--- which is the case where the intersection-theoretic arguments used here are applicable
--- it is of the type described above. 

In order to develop computational techniques for SFT-type invariants of the contact manifold, in which the control over 
holomorphic curves is crucial, the above results are important. In the situation described above,
one can arrange that the relevant holomorphic curves are asymptotic to periodic orbits
in fibers
lying over critical points of a Morse function.
Moreover, the normal linearized flow along these fibers
can be expressed in terms of the Hessian of the Morse function $f$. In particular, 
one can show that given a number $T>0$, every periodic orbit
$\gamma\in Y_{w}$ with period less than $T$ has normal Conley--Zehnder index given by the formula
\[
\mu_N^{\Phi}(\gamma)=\sign(w)\bp{\ind_{w}(f)-1}\in\br{-1, 0, 1}
\]
provided the function $f$ is sufficiently $C^{2}$ close to a constant,
so the hypotheses of our main theorem are met.
In the case where the degree of the resulting map $\bar v$ is zero ---
as it is, for example, in the case considered in Corollary \ref{c:main} above ---
then the above theorem reduces the study of certain holomorphic curves in the ambient
symplectization to that of the Liouville completion of the pages,
thus reducing the problem by two dimensions.
This fact, when combined with the symmetries of the Morse function $f$,
can be exploited to
obtain information on the contact structure as is done in  \cite{Moreno:thesis, Moreno:torsion}.

\begin{ack}
We'd like to thank Murat Sa\u{g}lam for some helpful feedback on an earlier draft of this work. 
\end{ack}

\section{Intersection theory of punctured pseudoholomorphic curves and pseudoholomorphic hypersurfaces}\label{s:intersections}
In this section we will review some results from the in-preparation work \cite{Siefring:highdim} which
are needed in the proof of our main result.

Let $(M^{2n+1}, \Ha=(\lambda, \omega))$ be a closed, orientable manifold equiped with a 
nondegenerate stable Hamiltonian structure.
Recall from that the introduction that a submanifold
$i:V^{2n-1}\hookrightarrow M$ is said to be a stong stable Hamiltonian hypersurface
if $\Ha'=(\lambda', \omega'):=(i^{*}\lambda, i^{*}\omega)$ is a stable Hamiltonian structure on
$V$ and the Reeb vector field $\X$ of $\Ha$ is everywhere tangent to $V$.
In this case
we have a splitting
\begin{equation}\label{e:splitting}
TM|_{V}=\R \X\oplus(\xi', \omega')\oplus (\xi_{V}^{\perp}, \omega)
\approx\R X_{\Ha'}\oplus (\xi\cap TV, \omega)\oplus (\xi_{V}^{\perp}, \omega)
\end{equation}
with $\xi_{V}^{\perp}$ the symplectic complement to $\xi'$ in $\xi|_{V}$ as defined 
in \eqref{e:symp-comp-def}, and we note that the first two summands give $TV$.
The linearized flow of $\X$ along $V$ preserves this splitting along with
the symplectic structure on the
second two summands.

Let $\gamma:S^{1}\approx\R/\Z\to M$ be a $T$-periodic orbit of $\X$, i.e.\ $\gamma$ satisfies the equation
\[
\dot\gamma(t)=T\cdot \X(\gamma(t))
\]
for all $t\in S^{1}$.
Assuming that 
$\gamma(S^{1})\subset V$, 
we can choose a symplectic trivialization of the
hyperplane distribution
$\xi=\xi'\oplus \xi_{V}^{\perp}|_{\gamma(S^{1})}$ along $\gamma$ which respects the splitting
\eqref{e:splitting}, i.e.\ one of the form
\[
\Phi=\Phi_{T}\oplus\Phi_{N}:\xi'\oplus \xi_{V}^{\perp}|_{\gamma(S^{1})} \to
S^{1}\times (\R^{2n-2}, \omega_{0})\oplus(\R^{2}, \omega_{0})
\]
with $\Phi_{T}$ and $\Phi_{N}$ symplectic trivialization of $\xi'|_{\gamma(S^{1})}$ and
$\xi_{V}^{\perp}|_{\gamma(S^{1})}$ respectively.
Given such a trivialization we can define the Conley--Zehnder index of the orbit
$\gamma$ viewed as an orbit in $M$ as usual by
\[
\mu^{\Phi}(\gamma)
:=\mu_{CZ}(\Phi(\gamma(t))\circ d\psi_{Tt}(\gamma(0))\circ\Phi(\gamma(0))^{-1})
\]
where $\psi:\R\times M\to M$ is the flow generated by $\X$ and where
$\mu_{CZ}$ on a path of symplectic matrices starting at the identity and ending at a matrix without $1$
in the spectrum is as defined in \cite[Theorem 3.1]{HWZ:prop2}.  But since, as observed above,
$d\psi_{t}$ preserves the splitting \eqref{e:splitting}, we can also consider the
Conley--Zenhder indices that arise from the restrictions of $d\psi_{t}$ to $\xi'|_{\gamma}$ and
$\xi_{V}^{\perp}|_{\gamma}$.
In particular we define
\[
\mu^{\Phi_{T}}_{V}(\gamma)
:=\mu_{CZ}(\Phi_{T}(\gamma(t))\circ d\psi_{Tt}(\gamma(0))|_{\xi'}\circ\Phi_{T}(\gamma(0))^{-1})
\]
which is the Conley--Zehnder index of $\gamma$ viewed as a periodic orbit lying in $V$, and
\[
\mu_{N}^{\Phi_{N}}(\gamma)
:=\mu_{CZ}(\Phi_{N}(\gamma(t))\circ d\psi_{Tt}(\gamma(0))|_{\xi_{V}^{\perp}}\circ\Phi_{N}(\gamma(0))^{-1})
\]
which we will call the \emph{normal Conley--Zehnder index of $\gamma$ relative to $\Phi_{N}$}.
We note that basic properties of Conley--Zehnder indices which can be found, e.g.\ in
\cite[Theorem 3.1]{HWZ:prop2} show that these quantites are related by
\[
\mu^{\Phi}(\gamma)=\mu_{V}^{\Phi_{T}}(\gamma)+\mu_{N}^{\Phi_{N}}(\gamma).
\]

We now recall that a complex structure $J$ on $\xi$ is said to be \emph{compatible} with the
stable Hamiltonian structure $(\lambda, \omega)$
if the bilinear form on $\xi$ defined by
$\omega(\cdot, J\cdot)|_{\xi\times\xi}$
is symmetric and positive definite.
We will denote the set of compatible complex structures on $\xi$ by $\J(M, \xi)$.
Given a
strong stable Hamiltonian hypersurface
$V\subset M$
and a choice of $J\in\J(M, \xi)$ we will say that $J$ is \emph{$V$-compatible} if $J$ fixes the
hyperplane distribution
$\xi'=TV\cap\xi$ along $V$.
We will denote the set of such complex structures by
$\J(M, V, \xi)$.
We claim that a $J\in\J(M, V, \xi)$ necessarily also fixes
$\xi_{V}^{\perp}$.  Indeed, compatibility of $J$ with
$\omega$
implies that
$\omega(J\cdot, J\cdot)|_{\xi\times\xi}=\omega|_{\xi\times \xi}$
since we can compute
\begin{align*}
\omega(Jv, Jw)&
=\omega(w, J(Jv))
& \text{symmetry of $\omega(\cdot, J\cdot)|_{\xi\times\xi}$} \\
&=\omega(w, -v)
& J^{2}=-I \\
&=\omega(v, w)
\end{align*}
for any sections $v$, $w$ of $\xi$.
Thus if $v\in\xi|_{V}$ is
$\omega$-orthogonal
to every vector in $\xi'=\xi\cap TV$, then $Jv$ is also
$\omega$-orthogonal
to every vector in $\xi'$ provided that $J$ fixes $\xi'$.
Thus $J$ fixes $\xi_{V}^{\perp}$ as claimed.
We note that since both the linearized flow $d\psi_{t}$ of $\X$
and a compatible $J\in\J(M, V, \xi)$ preserve the splitting
\eqref{e:splitting}, the asymptotic operator
\[
\A_\gamma h(t):=-J\left.\frac{d}{ds}\right|_{s=0}d\psi_{-Ts}h(t+s)
\]
of a periodic orbit $\gamma$ lying in $V$ also preserves the splitting.  We will write
\[
\A_\gamma=\A_\gamma^{T}\oplus\A_\gamma^{N}:W^{1,2}(\xi')\oplus W^{1,2}(\xi_{V}^{\perp})
\to L^{2}(\xi')\oplus L^{2}(\xi_{V}^{\perp})
\]
to indicate the resulting splitting of the operator.

Continuing to assume that $V\subset M$ is a strong
stable Hamiltonian hypersurface and
$J\in\J(M, V, \xi)$ is a $V$-compatible complex structure, we extend 
$J$ to an $\R$-invariant almost complex structure $\tilde J$ on $\R\times M$
in the usual way, i.e.\ so that $\tilde J$ satisfies \eqref{e:R-invariant-extension}.
We note that for such an almost complex structure, the submanifold
$\R\times V$ of $\R\times M$ is $\tilde J$-holomorphic since
we assume that $\X$ is tangent to $V$ and that $J$ fixes
$\xi'=\xi\cap TV$.
Just as one can consider holomorphic curves which are asymptotic to cylinders of the 
form $\R\times\br{\text{periodic orbit}}$ one can $\tilde J$-holomorphic hypersurfaces which
are asymptotic to cylindrindical $\tilde J$-holomorphic hypersurfaces of the form
$\R\times V$ with $V$ a
strong stable Hamiltonian hypersurface.
Before giving a more precise definition,
we introduce some more geometric data on our manifold.

Given a $J\in\J(M, \xi)$ we can define a Riemannian metric
\begin{equation}\label{e:metric-M}
g_{J}(v, w)=\lambda(v)\lambda(w)+\omega(\pi_{\xi}v, J\pi_{\xi}w)
\end{equation}
where
$\pi_{\xi}:TM\approx\R \X\oplus\xi\to\xi$
is the projection onto $\xi$ along $\X$.
We can extend $g_{J}$ to a metric $\tilde g_{J}$ on $\R\times M$
by forming the product metric with the standard metric on $\R$,
i.e.\ by defining
\begin{equation}\label{e:metric-RxM}
\tilde g_{J}:=da\otimes da+\pi^{*}g_{J}.
\end{equation}
We will denote the exponential maps of
$g_{J}$ and $\tilde g_{J}$ by $\exp$ and
$\widetilde\exp$ respectively and note that these are related by
\[
\widetilde\exp_{(a, p)}(b, v)=(a+b, \exp_{p}v).
\]
We note that if $J$ is $V$-compatible for some strong
stable Hamiltonian hypersurface $V\subset M$,
then the symplectic normal bundle $\xi_{V}^{\perp}$ is the
$g_{J}$-orthogonal complement of $TV$ in $TM|_{V}$, and that
$\pi^{*}\xi_{V}^{\perp}$ is the $\tilde g_{J}$-orthogonal complement of $T(\R\times V)$ in
$T(\R\times M)|_{\R\times V}$.
We further note that, since $V$ is assumed to be compact, the restrictions of
$\exp$ and $\widetilde\exp$ to $\xi_{V}^{\perp}$ and $\pi^{*}\xi_{V}^{\perp}$ respectively
are embeddings on some neighborhood of the zero sections.

Now consider a pair $V_{+}$, $V_{-}$ of
strong stable Hamiltonian hypersurfaces
and assume that
$V:=V_{+}\cup V_{-}$ is also a
strong stable Hamiltonian hypersurface,
i.e.\ that all components of $V_{+}$ and $V_{-}$ are either disjoint or identical.
We let $J\in\J(M, V, \xi)$ be a $V$-compatible $J$ with associated $\R$-invariant almost complex structure
$\tilde J$ on $\R\times M$.
We are interested in $\tilde J$-holomorphic submanifolds which outside of a compact set can be described
by exponentially decaying sections of the normal bundles to $V_{+}$ and $V_{-}$,  More precisely,
we say that a $\tilde J$-holomorphic submanifold $\tilde V\subset\R\times M$ is
\emph{positively asymptotically cylindrical over $V_{+}$} and
\emph{negatively asymptotically cylindrical over $V_{-}$} if there exists an $R>0$ and sections
\begin{gather*}
\eta_{+}:[R, +\infty)\to C^\infty(\xi_{V_{+}}^{\perp}) \\
\eta_{-}:(-\infty, -R] \to C^\infty(\xi_{V_{-}}^{\perp}) \\
\end{gather*}
so that
\begin{gather*}
\tilde V\cap\left([R, +\infty)\times M\right)=\bigcup_{(a, p)\in [R, +\infty)\times V_{+}} \widetilde\exp_{(a, p)}\eta_{+}(a, p) \\
\tilde V\cap\left((-\infty, -R]\times M\right)=\bigcup_{(a, p)\in (-\infty, -R]\times V_{-}} \widetilde\exp_{(a, p)}\eta_{-}(a, p)
\end{gather*}
and so that there exist constants $M_{i}>0$, $d>0$ satisfying
\[
\abs{\widetilde\nabla^{i}\eta_{\pm}(a, p)}\le M_{i}e^{-d \abs{a}}
\]
for all $i\in\N$ and $\pm a\in [R, +\infty)$, where
$\tilde\nabla$ is the extension of a connection $\nabla$ on  $\xi_{V}^{\perp}$
to a connection $\tilde \nabla$ on $\pi^{*}\xi_{V}^{\perp}$ defined by requiring
$\tilde\nabla_{\partial_{a}}\eta(a, p)=\partial_{a}\eta(a, p)$.
We will refer to the sections $\eta_{+}$ and $\eta_{-}$ respectively as \emph{positive}
and \emph{negative asymptotic representatives of $\tilde V.$}

Our main goal here is to understand the intersection properties of punctured
pseudoholomorphic curves with
asymptotically cylindrical pseudoholomorphic hypersurfaces.  The main difficulty 
arises from the noncompactness of the manifolds in question.
Indeed a punctured pseudoholomorphic curve whose image is not contained in the
$\tilde J$-holomorphic hypersurface $\tilde V$ may have punctures limiting to a periodic orbits lying in
$V_{+}$ or $V_{-}$.
In this case, it's not a priori clear that the intersection number between the curve and the hypersurface is finite.
Even assuming this intersection number is finite, it is not homotopy invariant as intersections can be lost or created
at infinity.
We will see below that these difficulties can be dealt with via higher-dimensional analogs of
techniques developed in \cite{Siefring:2011}.

Before presenting the relevant results it will be convenient
to establish some standard assumptions and notations for the next several
definitions and results.

\begin{assumptions}\label{standing-assumptions}
We assume that:
\begin{enumerate}\renewcommand{\theenumi}{\alph{enumi}}
\item $(M, \lambda, \omega)$ is a closed, manifold with nondegenerate stable Hamiltonian structure
$(\lambda, \omega)$ with $\xi=\ker\lambda$ and $\X$ the associated Reeb vector field,

\item $V_{+}\subset M$, $V_{-}\subset M$, and $V=V_{+}\cup V_{-}$ are
strong stable Hamiltonian hypersurfaces
of $M$,

\item $J\in\J(M, V, \xi)$ is a $V$-compatible complex structure on $\xi$ and
$\tilde J$ is the $\R$-invariant almost complex structure on $\R\times M$ associated to $J$ (defined by
\eqref{e:R-invariant-extension},

\item $\tilde g_{J}$ is the Riemannian metric on $\R\times M$ defined by \eqref{e:metric-RxM} and
$\widetilde\exp$ is the associated exponential map,

\item $\tilde V\subset\R\times M$ is a $\tilde J$-holomorphic hypersurface which is positively asymptotically cylindrical over $V_{+}$ and negatively asymptotically cylindrical over $V_{-}$,

\item $\eta_{+}:[R, +\infty)\to C^\infty (\xi_{V_{+}}^{\perp})$ and
$\eta_{-}:(-\infty, -R]\to C^{\infty}(\xi_{V_{-}}^{\perp})$ are, respectively, positive and negative asymptotic representatives of $\tilde V$,

\item $C=[S, j, \Gamma=\Gamma^{+}\cup\Gamma^{-}, \widetilde{u}=(a, u)]$ is a finite-energy $\tilde J$-holomorphic curve, and
at $z\in\Gamma$, $C$ is asymptotic to $\gamma_{z}^{m_{z}}$
(with $\gamma_{z}^{m_{z}}$ indicating the $m_{z}$-fold covering of a simple periodic orbit $\gamma_{z}$),

\item $\Phi$ is a trivialization of $\xi_{V}^{\perp}$ along every periodic orbit lying in $V$
which occurs as an asymptotic limit of $C$. 

\end{enumerate}
\end{assumptions}

The following theorem can be seen as a generalization of
Theorem 2.2 in \cite{Siefring:2008}.  Proof will be given in \cite{Siefring:highdim}.

\begin{theorem}\label{t:normal-asymptotics}
Assume \ref{standing-assumptions} and 
assume that at $z\in\Gamma^{+}$, $C$ is asymptotic to a periodic orbit
$\gamma_{z}^{m_{z}}\subset V_{+}$.
Then there exists an $R'\in\R$, a smooth map
\[
u_{T}:[R', \infty)\times S^{1}\to [R, \infty)\times V_{+}
\]
and a smooth section
\[
u_{N}:[R', \infty)\times S^{1}\to u_{T}^{*}\pi^{*}\xi_{V_{+}}^{\perp}
\]
so that the map
\begin{equation}
(s, t)\mapsto\widetilde\exp_{u_{T}(s, t)}u_{N}(s, t)
\end{equation}
parametrizes $C$ near $z$.
Moreover,
if we assume that the image of $C$ is not a subset of the asymptotically cylindrical hypersurface $\tilde V$,
then
\begin{equation}\label{e:normal-asymptotics}
u_{N}(s, t)-\eta_{+}(u_{T}(s, t))=e^{\mu s}[e(t)+r(s, t)]
\end{equation}
for all $(s, t)\in[R', \infty)\times S^{1}$
where:
\begin{itemize}
\item $\mu<0$ is a negative eigenvalue of the normal asymptotic operator
$\A_{\gamma_{z}^{m_{z}}}^{N}$,
\item $e\in\ker(\mathbf{A}_{\gamma_{z}^{m_{z}}}^{N}-\mu)\setminus\br{0}$ is an eigenvector with eigenvalue $\mu$, and
\item $r:[R', \infty)\times S^1 \to u_{T}^{*}\pi^{*}\xi_{V}^{\perp}$ is a smooth section satisfying
exponential decay estimates of the form
\begin{equation}\label{e:normal-asymptotics-remainder}
\abs{\tilde\nabla^{i}_{s}\tilde\nabla^{j}_{t} r(s, t)}\le M_{ij}e^{-d\abs{s}}
\end{equation}
for some positive contants $M_{ij}$, $d$ and all $(i, j)\in\N^{2}$
\end{itemize}

Similarly, if we 
assume that at $z\in\Gamma^{-}$, $C$ is asymptotic to a periodic orbit
$\gamma_{z}^{m_{z}}\subset V_{-}$., then there exists an $R'\in\R$, a smooth map
\[
u_{T}:(-\infty, R'] \times S^{1}\to (-\infty, -R]\times V_{-}
\]
and a smooth section
\[
u_{N}:(-\infty, R'] \times S^{1}\to u_{T}^{*}\pi^{*}\xi_{V_{-}}^{\perp}
\]
so that the map
\[
(s, t)\mapsto\widetilde\exp_{u_{T}(s, t)}u_{N}(s, t)
\]
parametrizes $C$ near $z$.
Moreover, if the image of $C$ is not contained in $\tilde V$, then 
$u_{N}(s, t)-\eta_{-}(u_{T}(s, t))$ satisfies a formula of the form \eqref{e:normal-asymptotics}
for all $(s, t)\in(-\infty, R']\times S^{1}$, where now:
\begin{itemize}
\item $\mu>0$ is a positive eigenvalue of the normal asymptotic operator
$\A_{\gamma_{z}^{m_{z}}}^{N}$,
\item $e\in\ker(\A_{\gamma_{z}^{m_{z}}}^{N}-\mu)\setminus\br{0}$, as before,
is an eigenvector with eigenvalue $\mu$, and
\item $r:(-\infty, R']\to u_{T}^{*}\pi^{*}\xi_{V}^{\perp}$ is a smooth section satisfying
exponential decay estimates of the form \eqref{e:normal-asymptotics-remainder}
for some positive contants $M_{ij}$, $d$.
\end{itemize}
\end{theorem}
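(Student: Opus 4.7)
The plan is to adapt the proof of Theorem 2.2 in \cite{Siefring:2008} to the present relative setting, treating the asymptotic hypersurface $\tilde V$ as a moving target.  Since $C$ is asymptotic to $\gamma_{z}^{m_{z}}\subset V_{+}$ and $\widetilde\exp$ restricted to $\pi^{*}\xi_{V_{+}}^{\perp}$ is an embedding of a neighborhood of the zero section, the image of $C$ near $z$ lies in a tubular neighborhood of $\R\times V_{+}$ and admits a unique decomposition of the form $(s,t)\mapsto\widetilde\exp_{u_{T}(s,t)}u_{N}(s,t)$, with $u_{T}$ into $[R,\infty)\times V_{+}$ and normal section $u_{N}$; standard holomorphic cylindrical coordinates near $\R\times\gamma_{z}^{m_{z}}$ inside $\R\times V_{+}$ then provide the domain $[R',\infty)\times S^{1}$.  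Because $\tilde J$ preserves the splitting $T(\R\times M)|_{\R\times V_{+}}\approx T(\R\times V_{+})\oplus \pi^{*}\xi_{V_{+}}^{\perp}$, the Cauchy--Riemann equation $d\tilde u\circ j=\tilde J(\tilde u)\circ d\tilde u$ decouples to leading order into a CR equation for $u_{T}$ in the symplectization of $(V_{+},\Ha')$ and a CR-type equation in the normal bundle for $u_{N}$ with nonlinear corrections; the hypersurface $\tilde V$ satisfies the analogous system with the identity and $\eta_{+}$ in place of $u_{T}$ and $u_{N}$.

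Setting $w(s,t):=u_{N}(s,t)-\eta_{+}(u_{T}(s,t))$ and subtracting the two normal equations --- after an appropriate connection is used to trivialize $u_{T}^{*}\pi^{*}\xi_{V_{+}}^{\perp}$ along the cylindrical end --- yields an equation of the form
\begin{equation*}
\partial_{s}w + \A^{N}_{\gamma_{z}^{m_{z}}} w + \rho(s,t)\, w + Q(s,t, w, \nabla w) = 0,
\end{equation*}
where the zeroth-order coefficient $\rho$ decays exponentially in $s$ and $Q$ is quadratic in $(w,\nabla w)$.  The exponential decay of $\rho$ relies on the exponential convergence of $u_{T}$ to $\R\times\gamma_{z}^{m_{z}}$, which is obtained by applying the standard asymptotic analysis to $u_{T}$ itself, viewed as an asymptotically cylindrical pseudoholomorphic map into the symplectization of $(V_{+},\Ha')$, together with the assumed exponential decay of $\eta_{+}$.

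Once this equation is in hand, the abstract asymptotic decay theorem for linear-to-leading-order Cauchy--Riemann operators (as in \cite{HWZ:prop2} and refined in \cite{Siefring:2008}) yields a dichotomy: either $w$ vanishes identically for all sufficiently large $s$, in which case unique continuation for $\tilde J$-holomorphic maps forces the entire image of $C$ to lie in $\tilde V$, contradicting our hypothesis; or $w(s,t)=e^{\mu s}[e(t)+r(s,t)]$ with $\mu$ a negative eigenvalue of $\A^{N}_{\gamma_{z}^{m_{z}}}$, $e\in\ker(\A^{N}_{\gamma_{z}^{m_{z}}}-\mu)\setminus\br{0}$, and $r$ satisfying the exponential decay estimates \eqref{e:normal-asymptotics-remainder} on all derivatives.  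The negative puncture case follows by reversing the sign of $s$, which interchanges positive and negative eigenvalues.

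The main obstacle is arranging the equation for $w$ so that after cancelling $\eta_{+}\circ u_{T}$ its linear part is genuinely $\partial_{s}+\A^{N}_{\gamma_{z}^{m_{z}}}$ modulo an exponentially small, first-order perturbation.  This requires a judicious choice of connection and trivialization identifying $u_{T}^{*}\pi^{*}\xi_{V_{+}}^{\perp}$ with the constant bundle $[R',\infty)\times S^{1}\times\xi_{V_{+}}^{\perp}|_{\gamma_{z}^{m_{z}}}$, together with the preliminary exponential convergence of $u_{T}$ discussed above; these steps are technically delicate but essentially mirror the computations in \cite{Siefring:2008}.
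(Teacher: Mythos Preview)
The paper does not actually prove this theorem: immediately before the statement it says ``Proof will be given in \cite{Siefring:highdim},'' and no argument is supplied in the present paper.  So there is no proof here to compare your proposal against.

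That said, your sketch is a reasonable outline of what the deferred proof is expected to look like.  The paper explicitly frames the result as a generalization of Theorem~2.2 in \cite{Siefring:2008}, and your strategy---decompose $\tilde u$ near the puncture into a tangential map $u_{T}$ into $\R\times V_{+}$ and a normal section $u_{N}$, subtract off $\eta_{+}\circ u_{T}$, and reduce to a Cauchy--Riemann-type equation whose linear part is $\partial_{s}+\A^{N}_{\gamma_{z}^{m_{z}}}$ modulo exponentially small error---is exactly the adaptation of \cite{Siefring:2008} to the hypersurface-relative setting that the announcement suggests.  You correctly identify the two genuinely technical points: establishing preliminary exponential convergence of $u_{T}$ (so that the coefficients of the perturbed equation decay), and choosing the trivialization/connection so that the linearization really is $\partial_{s}+\A^{N}_{\gamma_{z}^{m_{z}}}$ up to decaying terms.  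One subtlety you glide over is that the decoupling of the Cauchy--Riemann equation into tangential and normal parts is only exact along $\R\times V_{+}$; away from it the cross terms must be absorbed into the quadratic remainder $Q$ and the decaying zeroth-order term, and checking this carefully is where most of the work in \cite{Siefring:highdim} presumably lies.
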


We note that the bundles of the form
$u_{T}^{*}\pi^{*}\xi_{V}^{\perp}$
ocurring in the statement of this theorem are trivializable
since they are complex line bundles over a space which retracts onto $S^{1}$.
In any trivialization the eigenvector $e$ from formula \eqref{e:normal-asymptotics}
satisfies a linear, nonsingular ODE,
and thus is nowhere vanishing since we assume it is not identically zero.
Since the ``remainder term'' $r$ in the formula \eqref{e:normal-asymptotics}
converges to zero, we conclude that the functions
$u_{N}(s, t)-\eta_{\pm}(u_{T}(s, t))$ are nonvanishing for sufficiently large $\abs{s}$.
However, since zeroes of this function can be seen to correspond to intersections between
the curve $C$ and the hypersurface $\tilde V$ occuring sufficiently close to the punctures of $C$, we conclude
that all intersections between $C$ and $\tilde V$ are contained in a compact set.
Moreover, since intersections between $C$ and $\tilde V$ can be shown to be isolated and of positive local order
(see e.g.\  \cite[Lemma 3.4]{IonelParker:2003}),
we conclude that the algebraic intersection number between $C$ and $\tilde V$ is finite:

\begin{corollary}\label{c:finite-intersections}
Assume \ref{standing-assumptions} and assume that
no component of  the curve $C$
has image contained in 
the $\tilde J$-holomorphic hypersurface $\tilde V$.  Then the algebraic intersection number
$C\cdot \tilde V$, defined by summing local intersection indices, is finite and nonnegative, and
$C\cdot \tilde V=0$ precisely when $C$ and $\tilde V$ do not intersect.
\end{corollary}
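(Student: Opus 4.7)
The plan is to combine Theorem \ref{t:normal-asymptotics} with positivity of intersections of $\tl J$-holomorphic subvarieties, essentially formalizing the argument sketched in the paragraph preceding the statement.

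First, I would partition the punctures of $C$ into those whose asymptotic limit lies in $V$ (split further into positive ends over orbits in $V_+$ and negative ends over orbits in $V_-$) and those whose limit lies in $M\setminus V$. At a puncture $z$ of the first kind, Theorem \ref{t:normal-asymptotics} supplies cylindrical coordinates $(s,t)$ together with the decomposition
\[
u_N(s,t)-\eta_\pm(u_T(s,t)) = e^{\mu s}\bp{e(t)+r(s,t)},
\]
where $e$ is a nonzero eigenvector of the normal asymptotic operator $\A_{\gamma_z^{m_z}}^N$ and $r$ decays exponentially. Since the pulled-back complex line bundle $u_T^{*}\pi^{*}\xi_V^\perp$ is trivializable, in any trivialization $e(t)$ solves a linear first-order ODE with smooth coefficients, so it is either identically zero or nowhere vanishing, and the hypothesis $e\not\equiv 0$ forces the latter. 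Combined with the decay of $r$, this makes $u_N(s,t)-\eta_\pm(u_T(s,t))$ nowhere zero for all $|s|$ sufficiently large. For punctures of the second kind, exponential convergence of $C$ to a cylinder over an orbit in $M\setminus V$, together with $\tl V$ being asymptotically cylindrical over the compact set $V_\pm$, implies that $C$ and $\tl V$ are disjoint on a punctured neighborhood of the puncture.

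Next I would observe that zeroes of $u_N-\eta_\pm$ near a puncture of the first type correspond exactly to intersection points of $C$ with $\tl V$ lying near that puncture, so by the previous step there exists a compact set $K\subset S\setminus\Gamma$ containing every intersection point of $C$ with $\tl V$. Since no component of $C$ is contained in $\tl V$, the positivity of intersections for $\tl J$-holomorphic subvarieties (as in \cite[Lemma 3.4]{IonelParker:2003}) guarantees that the intersection points are isolated and of strictly positive local intersection index. An isolated subset of the compact set $K$ is finite, so $C\cdot\tl V$ is a finite sum of positive integers; this simultaneously yields finiteness, nonnegativity, and the equivalence $C\cdot\tl V=0\iff C\cap\tl V=\emptyset$.

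The main obstacle is confining all intersections to a compact subset of $S\setminus\Gamma$, ruling out the possibility of a sequence of intersections accumulating at a puncture. This is where the precise leading-order asymptotic expansion from Theorem \ref{t:normal-asymptotics} is essential; once that compactness is in hand, positivity of intersections renders the remaining assertions immediate.
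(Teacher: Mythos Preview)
Your proposal is correct and follows essentially the same argument as the paper: use the asymptotic expansion of Theorem \ref{t:normal-asymptotics} together with the nowhere-vanishing of the eigenvector $e$ and the decay of $r$ to confine intersections to a compact set, then invoke local positivity of intersections (\cite[Lemma 3.4]{IonelParker:2003}) to conclude. You are in fact slightly more careful than the paper's exposition in that you explicitly treat punctures whose asymptotic orbit lies in $M\setminus V$; the paper's discussion leaves this case implicit.
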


This corollary deals with the first difficulty in understanding intersections between punctured curves and asymptotically cylindrical hypersurfaces described above, namely the finiteness of the intersection number. A second consequence of the asymptotic formula from Theorem \ref{t:normal-asymptotics}, again stemming
from the fact that the quantities $u_{N}(s, t)-\eta_{\pm}(u_{T}(s, t))$ are nonzero for sufficiently large
$\abs{s}$, is that the normal approach of the curve $C$ has a well-defined winding number
relative to a trivialization $\Phi$ of $\xi_{V}^{\perp}|_{\gamma_{z}}$.  This winding will be given by
the winding of the eigenvector from formula \eqref{e:normal-asymptotics} relative to $\Phi$, and for a given 
puncture $z$ of $C$, we will denote this quantity by
\[
\wind_{rel}^\Phi((C; z), \tilde V)=\wind(e).
\]
Combining this observation with the characterization of the Conley--Zehnder
index in terms of the asymptotic operator from \cite[Definition 3.9/Theorem 3.10]{HWZ:prop2} leads
to the following corollary.

\begin{corollary}\label{c:normal-rel-winding}
Assume \ref{standing-assumptions}, and assume that
no component of the curve
$C=[S, j, \Gamma_{+}\cup\Gamma_{-}, \tilde u=(a, u)]$
has image contained in
the holomorphic hypersurface $\tilde V$.  Then:
\begin{itemize}
\item If $z\in\Gamma$ is a positive puncture at which $\tilde u$ limits to $\gamma_z^{m_z}\subset V_{+}$ then
\begin{equation}\label{e:wind-rel-pos}
\wind_{rel}^{\Phi}((C; z), \tilde V)\le \fl{\mu_{N}^{\Phi}(\gamma_z^{m_z})/2}=:\alpha_N^{\Phi;-}(\gamma_z^{m_z}).
\end{equation}

\item If $z\in\Gamma$ is a negative puncture at which $\tilde u$ limits to $\gamma_z^{m_z}\subset V_{-}$ then
\begin{equation}\label{e:wind-rel-neg}
\wind_{rel}^{\Phi}((C; z), \tilde V)\ge \ceil{\mu_{N}^{\Phi}(\gamma_z^{m_z})/2}=:\alpha_N^{\Phi;+}(\gamma_z^{m_z}).
\end{equation}
\end{itemize}
\end{corollary}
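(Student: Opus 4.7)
The plan is to combine the asymptotic formula in Theorem \ref{t:normal-asymptotics} with the spectral characterization of the Conley--Zehnder index via the asymptotic operator, as indicated in the paragraph preceding the statement. The definition given just before the corollary already asserts that $\wind_{rel}^{\Phi}((C;z),\tilde V) = \wind^{\Phi}(e)$, where $e$ is the leading eigenvector appearing in the expansion \eqref{e:normal-asymptotics}. So my first step is simply to make this identification rigorous: the remainder $r(s,t)$ in \eqref{e:normal-asymptotics} decays exponentially by \eqref{e:normal-asymptotics-remainder}, while the eigenvector $e$ solves a nonsingular linear ODE in a trivialization of $u_T^{*}\pi^{*}\xi_{V}^{\perp}$ and is therefore nowhere vanishing; consequently for $|s|$ large the normal approach $u_N(s,t)-\eta_{\pm}(u_T(s,t))$ is a nonvanishing loop in the trivialization whose winding agrees with that of $e$.

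The second step is to invoke the standard spectral description of the Conley--Zehnder index applied to the \emph{normal} asymptotic operator $\mathbf{A}^{N}_{\gamma_z^{m_z}}$ of \eqref{e:splitting}. This operator is self-adjoint with discrete real spectrum accumulating only at $\pm\infty$, and its eigenvectors, being nowhere-vanishing sections of a complex line bundle over $S^{1}$, have well-defined windings relative to $\Phi$; the map $\mu \mapsto \wind^{\Phi}(e_{\mu})$ is monotone nondecreasing in the eigenvalue $\mu$. The characterization in \cite[Definition 3.9/Theorem 3.10]{HWZ:prop2} then says precisely that the maximal winding of an eigenvector with negative eigenvalue equals $\fl{\mu_{N}^{\Phi}(\gamma_z^{m_z})/2}$, and the minimal winding of an eigenvector with positive eigenvalue equals $\ceil{\mu_{N}^{\Phi}(\gamma_z^{m_z})/2}$. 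I would cite this directly rather than reprove it.

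The third step is to read off the two bounds from the sign of $\mu$ in Theorem \ref{t:normal-asymptotics}. At a positive puncture the theorem gives $\mu<0$, so $e$ is an eigenvector with negative eigenvalue and hence its winding is at most the extremal value $\alpha_N^{\Phi;-}(\gamma_z^{m_z}) = \fl{\mu_{N}^{\Phi}/2}$; this is exactly \eqref{e:wind-rel-pos}. At a negative puncture the theorem gives $\mu>0$, so $e$ is an eigenvector with positive eigenvalue and its winding is at least $\alpha_N^{\Phi;+}(\gamma_z^{m_z}) = \ceil{\mu_{N}^{\Phi}/2}$, giving \eqref{e:wind-rel-neg}.

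The only nontrivial point is checking that the classical HWZ-style spectral/winding analysis really transfers to the normal operator $\mathbf{A}^{N}$ rather than to the full hyperplane asymptotic operator $\mathbf{A}$. This is where I expect the main (though modest) obstacle to lie; it is handled by observing that $\mathbf{A}$ splits as $\mathbf{A}^{T}\oplus\mathbf{A}^{N}$ along the decomposition \eqref{e:splitting} preserved by $J$ and by the linearized Reeb flow, so $\mathbf{A}^{N}$ is itself a self-adjoint Cauchy--Riemann-type operator on the complex line bundle $\xi_{V}^{\perp}|_{\gamma}$ and the HWZ results apply verbatim after replacing the full hyperplane bundle by $\xi_{V}^{\perp}$. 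Once this is in place, the corollary follows immediately from the three steps above.
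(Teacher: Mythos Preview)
Your proposal is correct and follows exactly the approach the paper takes: the paper's argument, given in the paragraph immediately preceding the corollary, consists precisely of identifying $\wind_{rel}^{\Phi}((C;z),\tilde V)=\wind(e)$ via the asymptotic formula of Theorem~\ref{t:normal-asymptotics} and then invoking the characterization of the Conley--Zehnder index from \cite[Definition 3.9/Theorem 3.10]{HWZ:prop2}. Your additional care in checking that the HWZ winding analysis transfers to the normal operator $\A^{N}$ (via the splitting $\A=\A^{T}\oplus\A^{N}$) is a point the paper leaves implicit, so your write-up is if anything more complete.
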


The numbers $\alpha_N^{\Phi;-}(\gamma)$ and $\alpha_N^{\Phi;+}(\gamma)$ are, respectively, the biggest/smallest winding number achieved by an eigenfunction of the normal asymptotic operator of any orbit $\gamma$ corollary responding to a negative/positive eigenvalue. Observe that we have the formulas 
\begin{equation}\label{alphas}
\mu_{N}^{\Phi}(\gamma)= 2\alpha_N^{\Phi;-}(\gamma)+p_N(\gamma)=2\alpha_N^{\Phi;+}(\gamma)-p_N(\gamma),
\end{equation}
where $p_N(\gamma) \in \{0,1\}$ is the \emph{normal parity} of the orbit $\gamma$ (which is independent of the trivialization $\Phi$).

We will see in a moment that this corollary
can be used to 
deal with the second difficulty in understanding intersections between 
punctured curves and asymptotically cylindrical hypersurfaces described above, namely, the fact
that the algebraic intersection number may not be invariant under homotopies.
We first introduce some terminology.
Assuming again \ref{standing-assumptions} and that
no component of
$C$ is a subset of $\tilde V$, we define the asymptotic
intersection number at the punctures of $C$ in the following way:
\begin{itemize}
\item
If for the positive puncture $z\in\Gamma_{+}$, $\gamma^{m_z}_{z}\subset V_{+}$, we define the 
\emph{asymptotic intersection number $\delta_{\infty}((C; z); \tilde V)$ of $C$ at $z$ with $\tilde V$} by
\begin{equation}\label{e:local-asymp-inum-pos}
\delta_{\infty}((C; z), \tilde V)=\fl{\mu_{N}^{\Phi}(\gamma_z^{m_z})/2}-\wind_{rel}^{\Phi}((C; z), \tilde V).
\end{equation}

\item 
If for the negative puncture $z\in\Gamma_{-}$, $\gamma^{m_z}_{z}\subset V_{-}$, we define the 
\emph{asymptotic intersection number $\delta_{\infty}((C; z); \tilde V)$ of $C$ at $z$ with $\tilde V$} by
\begin{equation}\label{e:local-asymp-inum-neg}
\delta_{\infty}((C; z), \tilde V)=\wind_{rel}^{\Phi}((C; z), \tilde V)-\ceil{\mu_{N}^{\Phi}(\gamma^{m_z}_{z})/2}.
\end{equation}

\item For all other punctures $z\in\Gamma_{\pm}$
(i.e. those for which $\gamma_{z}$ is not contained in $V_{\pm}$),
we define
\begin{equation}\label{e:local-asymp-inum-zero}
\delta_\infty((C; z), \tilde V)=0.
\end{equation}
\end{itemize}
We then define the \emph{total asymptotic intersection number of $C$ with $\tilde V$} by
\begin{equation}\label{e:global-asymp-inum}
\delta_{\infty}(C, \tilde V)=\sum_{z\in\Gamma}\delta_{\infty}((C; z), \tilde V).
\end{equation}

We observe that as a result of Corollary \ref{c:normal-rel-winding} the local and total asymptotic intersection numbers are always nonnegative.

\vspace{0.5cm}

Continuing to assume \ref{standing-assumptions} (but no longer necessarily that no component of
the curve $C$ has image contained in $\tilde V$),
we can use the trivialization $\Phi$ of $\xi_{V}^{\perp}$ along the asymptotic periodic orbits of $C$ lying
in $V$ to construct a perturbation $C_{\Phi}$ of $C$ in the following way.
For each puncture $z\in\Gamma$ for which the asymptotic limit $\gamma_{z}^{m_{z}}$ lies in $V$,
we first extend $\Phi$ to a trivialization
$\Phi:\xi_{V}^{\perp}|_{U_{z}}\to U_{z}\times\R^{2}$ on some open neighborhood 
$U_{z}\subset V$ of the asymptotic limit $\gamma_{z}$.  Then we consider the asymptotic parametrization
\[
(s, t)\mapsto \widetilde\exp_{u_{T}(s, t)}u_{N}(s, t)
\]
from Theorem \ref{t:normal-asymptotics} above
for $(s, t)\in [R,+\infty)\times S^{1}$ or $(-\infty, -R]\times S^{1}$ as appropriate,
where $R>0$ is chosen large enough so that $u_{T}$ has image contained in the neighborhood $U_{z}$ of
$\gamma_{z}$ on which the trivialization $\Phi$ has been extended.
We then perturb the map by replacing the above parametrization of $C$ near $z$ by the map
\[
(s, t)\mapsto \widetilde\exp_{u_{T}(s, t)}\bp{u_{N}(s, t)+\beta(\abs{s})\Phi(u_{T}(s, t))^{-1}\varepsilon}
\]
where $\beta:[0, \infty)\to[0, 1]$ is a smooth cut-off function equal to $0$ for $s<\abs{R}+1$ and equal to
$1$ for $\abs{s}>\abs{R}+2$, and $\varepsilon \ne 0$ is thought of as a number in $\C\approx\R^{2}$.
Given this, we can then define the 
\emph{relative intersection number $i^{\Phi}(C, \tilde V)$
of $C$ and $\tilde V$ relative to the the trivialization $\Phi$}
by
\[
i^{\Phi}(C, \tilde V):=C_{\Phi}\cdot \tilde V.
\]
It can be shown that this number is independent of choices made in the construction of $C_{\Phi}$ provided
the perturbations are sufficiently small.

Again continuing to assume \ref{standing-assumptions},
we now define the
\emph{holomorphic intersection product of $C$ and $\tilde V$} by
\[
C*\tilde V:=i^{\Phi}(C,\tilde V)
+\sum_{\stackrel{z\in\Gamma_{+}}{\gamma_{z}\subset V_{+}}}\fl{\mu_{N}^{\Phi}(\gamma_{z}^{m_{z}})/2}
-\sum_{\stackrel{z\in\Gamma_{-}}{\gamma_{z}\subset V_{-}}}\ceil{\mu_{N}^{\Phi}(\gamma_{z}^{m_{z}})/2}
\]
The key facts about the holomorphic intersection product are now given in the following theorem which generalizes
\cite[Theorem 2.2/4.4]{Siefring:2011}.
\begin{theorem}[Generalized positivity of intersections]\label{t:intersection-positivity}
With $\tilde V$ and $C$ as in \ref{standing-assumptions}, assume that $C$ is not contained in $\tilde V$,
the holomorphic intersection product $C*\tilde V$ depends only on the relative homotopy classes of $C$ and $\tilde V$.
Moreover, if the image of $C$ is not contained in $\tilde V$, then
\[
C*\tilde V=C\cdot \tilde V+\delta_{\infty}(C, \tilde V)\ge 0
\]
where $C\cdot \tilde V$ is the algebraic intersection number, defined by summing local intersection indices,
and $\delta_{\infty}(C, \tilde V)$ is the total asymptotic intersection number, defined by
\eqref{e:local-asymp-inum-pos}--\eqref{e:global-asymp-inum}.
In particular, $C*\tilde V\ge 0$ and equals zero if and only if $C$ and $\tilde V$ don't intersect and
all asymptotic intersection numbers are zero.
\end{theorem}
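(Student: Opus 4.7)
The plan is to follow the strategy of \cite{Siefring:2011}, Theorems 2.2/4.4, using the asymptotic formula from Theorem \ref{t:normal-asymptotics} as the main analytic input. The key observation is that near each puncture $z$ at which $C$ limits to an orbit in $V_\pm$, the \emph{normal approach} $u_N(s,t)-\eta_\pm(u_T(s,t))$ admits an expansion of the form $e^{\mu s}[e(t)+r(s,t)]$ in which $e$ is a nowhere-vanishing eigenvector of $\A_{\gamma_z^{m_z}}^N$ whose winding number relative to $\Phi$ computes $\wind_{rel}^\Phi((C;z),\tl V)$, and everything will be reduced to counting zeros of a perturbed version of this normal approach.

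First I would establish that $i^\Phi(C,\tl V)$ is well defined. On each cylindrical end the perturbed curve $C_\Phi$ is, after composing with the asymptotic parametrization, described by replacing $u_N$ by $u_N+\beta(\abs{s})\Phi^{-1}\ep$, so intersections with $\tl V$ on the end correspond to zeros of
\[
f(s,t)=e^{\mu s}[e(t)+r(s,t)]+\beta(\abs{s})\Phi^{-1}\ep.
\]
For fixed small $\ep\neq 0$, the outer region where $\beta\equiv 1$ contains no zeros since $\Phi^{-1}\ep$ dominates the exponentially decaying term there, so $C_\Phi\cdot\tl V$ is a finite integer. A standard cobordism argument shows this value is independent of $\ep$ and of the cutoff $\beta$ once the perturbation is sufficiently small.

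Next I would prove the formula $C*\tl V=C\cdot\tl V+\delta_\infty(C,\tl V)$. For a small perturbation, interior intersections (where $\beta\equiv 0$) biject with sign to those of $C$ with $\tl V$, contributing $C\cdot\tl V$. For a positive puncture $z$ with $\gamma_z\subset V_+$, the algebraic count of zeros of $f$ on the end is computed via a winding argument on a long annulus: on the inner boundary circle $f\sim e^{\mu s}e(t)$ has winding $\wind(e)=\wind_{rel}^\Phi((C;z),\tl V)$, while on the outer circle $f\sim\Phi^{-1}\ep$ has winding zero; the difference, $-\wind_{rel}^\Phi((C;z),\tl V)$, gives the signed zero count, which after comparison with $\alpha_N^{\Phi;-}(\gamma_z^{m_z})=\fl{\mu_N^\Phi(\gamma_z^{m_z})/2}$ and the definition of $C*\tl V$ contributes exactly $\fl{\mu_N^\Phi(\gamma_z^{m_z})/2}-\wind_{rel}^\Phi((C;z),\tl V)=\delta_\infty((C;z),\tl V)$; the negative-puncture case is symmetric using $\alpha_N^{\Phi;+}$. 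Summing over all punctures and combining with the definition of the holomorphic intersection product yields the asserted formula, and positivity $C*\tl V\geq 0$ is then immediate: $C\cdot\tl V\geq 0$ by local positivity of intersections of $\tl J$-holomorphic submanifolds (the same fact used in Corollary \ref{c:finite-intersections}), while each $\delta_\infty((C;z),\tl V)\geq 0$ is exactly the content of Corollary \ref{c:normal-rel-winding}.

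Finally, relative homotopy invariance follows from combining three observations: changing $\Phi$ by an integer shift on $\gamma_z^{m_z}$ alters $i^\Phi$, $\fl{\mu_N^\Phi/2}$, and $\ceil{\mu_N^\Phi/2}$ by compensating amounts so that $C*\tl V$ is unchanged; the Conley--Zehnder quantities are themselves relative homotopy invariants of the asymptotic orbits; and under a homotopy of the pair $(C,\tl V)$ through asymptotically cylindrical objects (with asymptotic limits fixed), the perturbation construction can be carried out in a parametrized fashion, giving a compactly supported family of intersections whose algebraic count is constant. The main obstacle I expect is the local zero-counting step in the previous paragraph: the annulus winding argument must be made rigorous uniformly in the exponentially small remainder $r(s,t)$, using nondegeneracy of the eigenvector $e$ to trivialize the normal bundle $u_T^*\pi^*\xi_{V}^\perp$ and convert the ODE satisfied by $e$ into an explicit winding statement. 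This is the higher-dimensional analog of the most delicate portion of \cite{Siefring:2011} and relies essentially on the precise form of the asymptotic formula supplied by Theorem \ref{t:normal-asymptotics}.
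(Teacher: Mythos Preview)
Your proposal is correct and follows essentially the same approach as the paper. The paper's proof is only a brief sketch (with details deferred to \cite{Siefring:highdim}): it isolates the key formula
\[
i^{\Phi}(C, \tilde V)=
C\cdot \tilde V
-\sum_{\stackrel{z\in\Gamma_{+}}{\gamma_{z}\subset V_{+}}}\wind^\Phi_{rel}((C; z), \tilde V)
+\sum_{\stackrel{z\in\Gamma_{-}}{\gamma_{z}\subset V_{-}}}\wind^\Phi_{rel}((C; z), \tilde V)
\]
and then invokes Corollary \ref{c:normal-rel-winding}. Your annulus winding argument on each end is precisely the mechanism that produces this formula (the end contribution $-\wind_{rel}^{\Phi}$ at positive punctures and $+\wind_{rel}^{\Phi}$ at negative punctures), and your appeals to local positivity of intersections and Corollary \ref{c:normal-rel-winding} for the nonnegativity of $C\cdot\tilde V$ and $\delta_\infty$ match the paper exactly; you simply supply more of the analytic detail that the paper defers.
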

The proof of this theorem follows along very similar lines to Theorem 2.2/4.4 in \cite{Siefring:2011}.
The essential point is that the relative intersection number can be shown to be given by the formula
\[
i^{\Phi}(C, \tilde V)=
C\cdot \tilde V
-\sum_{\stackrel{z\in\Gamma_{+}}{\gamma_{z}\subset V_{+}}}\wind^\Phi_{rel}((C; z), \tilde V)
+\sum_{\stackrel{z\in\Gamma_{-}}{\gamma_{z}\subset V_{-}}}\wind^\Phi_{rel}((C; z), \tilde V).
\]
The result will then follow from Corollary \ref{c:normal-rel-winding} above. Detailed proof will be given in \cite{Siefring:highdim}.

Analogous to the case in four dimensions studied in \cite{Siefring:2011}, the
$\R$-invariance of the almost complex structure in the set-up here allows one to compute
the holomorphic intersection number and (in some cases) the algebraic intersection
number of a holomorphic curve and a holomorphic hypersurface with respect
to asymptotic winding numbers and intersections of each object with the asymptotic limits
of the other.

Before stating the relevant results we will first make some additional assumptions. We will henceforth assume that:
\begin{assumptions}\label{assumptions-2}$\;$
\begin{enumerate}\renewcommand{\theenumi}{\alph{enumi}}
\item $V_{+}$ and $V_{-}$ are disjoint,
\item $\xi_{V}^{\perp}$ of $V=V_{+}\cup V_{-}$ is trivializable, 
\item $\Phi:\xi_{V}^{\perp}\to V\times \R^{2}$ is a global trivialization,
\item $\tilde V$ is connected, and
\item the projection $\pi(\tilde V)$ of $\tilde V$ to $M$ is an embedded codimension-$1$ submanifold of
$M\setminus V$.
\end{enumerate}
\end{assumptions}
Under these assumptions, $\tilde V$ has a well-defined winding
$\wind_{\infty}^{\Phi}(\tilde V, \gamma)$ relative to $\Phi$
around any orbit $\gamma\subset V=V_{+}\cup V_{-}$
which can be defined by considering the asymptotic representatives $\eta_{+}$ or $\eta_{-}$ as appropriate
and computing
\begin{equation}\label{e:V-gamma-wind-1}
\wind_{\infty}^{\Phi}(\tilde V, \gamma)=\lim_{\abs{s}\to\infty}\wind \Phi^{-1}\eta_{\pm}(s, \gamma(\cdot)),
\end{equation}
or, equivalently by
\begin{equation}\label{e:V-gamma-wind-2}
\wind_{\infty}^{\Phi}(\tilde V, \gamma)=\wind_{rel}^{\Phi}((\R\times\gamma; \pm\infty), \tilde V).
\end{equation}
As in Corollary \ref{c:normal-rel-winding} above, it follows from the
asymptotic formula from Theorem \ref{t:normal-asymptotics} above and the characterization of the
Conley--Zehnder index from \cite{HWZ:prop2} that
\[
\wind_{\infty}^{\Phi}(\tilde V, \gamma)\le \fl{\mu_{N}^{\Phi}(\gamma)/2}=\alpha_N^{\Phi;-}(\gamma)
\]
if $\gamma\subset V_{+}$ and
\[
\wind_{\infty}^{\Phi}(\tilde V, \gamma)\ge \ceil{\mu_{N}^{\Phi}(\gamma)/2}=\alpha_N^{\Phi;+}(\gamma)
\]
if $\gamma\subset V_{-}$.

The following theorem, which can be seen as the higher dimensional version of
\cite[Corollary 5.11]{Siefring:2011}, gives a computation of the algebraic intersection number of
$\tilde V$ and $\R$-shifts of the curve $C$ in terms of the asymptotic data, and the intersections of
each object with the asymptotic limits of the other.
Proof will be given in \cite{Siefring:highdim}

\begin{theorem}\label{t:intersection-computation}
Assume \ref{standing-assumptions} and \ref{assumptions-2} and that the curve $C$ is connected and
not equal to an orbit cylinder and not contained in $\R\times V$.
For $c\in\R$, denote by $C_{c}$ the curve obtained from translating $C$ in the $\R$-coordinate by $c$.
Then for all but a finite number of value of $c\in\R$, the algebraic intersection number
$C_{c}\cdot \tilde V$ is given by the formulas:

\begin{equation*}
\begin{split}
C_{c}\cdot \tilde V
&=C\cdot (\R\times V_{+}) \\
&\hskip.25in
+\sum_{\stackrel{z\in\Gamma_{+}}{\gamma_{z}\in V_{+}}} 
\bp{\max\br{m_{z}\wind_{\infty}^{\Phi}(\tilde V, \gamma_{z}), \wind^{\Phi}_{rel}((C;z), \R\times V_{+})}-\wind^{\Phi}_{rel}((C;z), \R\times V_{+})} \\
&\hskip.25in
+ \sum_{z\in\Gamma_{-}}m_{z}(\R\times \gamma_{z})\cdot \tilde V  \\
&\hskip.25in
+\sum_{\stackrel{z\in\Gamma_{-}}{\gamma_{z}\in V_{-}}}
\bp{m_{z}\wind_{\infty}^{\Phi}(\tilde V, \gamma_{z})
-
\min\br{m_{z}\wind_{\infty}^{\Phi}(\tilde V, \gamma_{z}), \wind^{\Phi}_{rel}((C;z), \R\times V_{-})}} \\
&\hskip.25in
+\sum_{\stackrel{z\in\Gamma_{-}}{\gamma_{z}\in V_{+}}}
\bp{\wind_{rel}^{\Phi}((C;z), \R\times V_{+})-m_{z}\wind_{rel}^{\Phi}(\tilde V, \gamma_{z})} \\
\end{split}
\end{equation*}
\begin{equation*}
\begin{split}
&=\sum_{z\in\Gamma_{+}}m_{z}(\R\times \gamma_{z})\cdot \tilde V  \\
&\hskip.25in
+\sum_{\stackrel{z\in\Gamma_{+}}{\gamma_{z}\in V_{+}}}
\bp{\max\br{m_{z}\wind_{\infty}^{\Phi}(\tilde V, \gamma_{z}), \wind_{rel}^{\Phi}((C; z), \R\times V_{+})}-m_{z}\wind_{\infty}^{\Phi}(\tilde V, \gamma_{z})} \\
&\hskip.25in
+C\cdot (\R\times V_{-}) \\
&\hskip.25in
+\sum_{\stackrel{z\in\Gamma_{-}}{\gamma_{z}\in V_{-}}}
\bp{\wind_{rel}^{\Phi}((C;z), \R\times V_{-})-\min\br{m_{z}\wind_{\infty}^{\Phi}(\tilde V, \gamma_{z}),\wind_{rel}^{\Phi}((C;z), \R\times V_{-})}} \\
&\hskip.25in
+\sum_{\stackrel{z\in\Gamma_{+}}{\gamma_{z}\in V_{-}}}
\bp{m_{z}\wind_{\infty}^{\Phi}(\tilde V, \gamma_{z})-\wind_{rel}^{\Phi}((C;z), \R\times V_{-})}
\end{split}
\end{equation*}

with each of the grouped terms always nonnegative.
\end{theorem}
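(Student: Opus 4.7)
The plan is to explicitly count the transverse intersections of $C_c$ with $\tilde V$ by decomposing $\R\times M$ into a compact middle region and the two asymptotically cylindrical ends of $\tilde V$, identifying each group of terms in the stated formula with contributions from one of these regions. By Corollary~\ref{c:finite-intersections} the total intersection number $C_c\cdot\tilde V$ is finite, and the hypotheses that $C$ is not an orbit cylinder and is not contained in $\R\times V$, together with the embeddedness of $\pi(\tilde V)\subset M\setminus V$ from Assumptions~\ref{assumptions-2}(e), guarantee that $C$ is not contained in $\tilde V$. For $c$ outside a finite exceptional set (precisely those values at which an intersection crosses a chosen neck level $\{\pm R_0\}\times M$), I may assume all intersections are transverse and none lie on the neck levels.

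In the positive end $[R_0,\infty)\times M$ the hypersurface $\tilde V$ is a graph of the section $\eta_+\in C^\infty(\xi_{V_+}^\perp)$, and near a puncture $z\in\Gamma_+$ with $\gamma_z\subset V_+$ the curve $C_c$ is given by Theorem~\ref{t:normal-asymptotics} as the graph of a section $u_N$ of $u_T^*\pi^*\xi_{V_+}^\perp$. Intersections of $C_c$ with $\tilde V$ in this end therefore correspond to zeros of the trivialized difference $\Phi\circ(u_N - \eta_+\circ u_T)$, which is a nonvanishing loop in $\C\setminus\{0\}$ for each $s\gg 0$ with asymptotic winding $\wind_{rel}^\Phi((C;z),\R\times V_+) - m_z\wind_\infty^\Phi(\tilde V,\gamma_z)$. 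A standard winding-count then yields exactly $\max\{m_z\wind_\infty^\Phi(\tilde V,\gamma_z),\,\wind_{rel}^\Phi((C;z),\R\times V_+)\} - \wind_{rel}^\Phi((C;z),\R\times V_+)$ transverse intersections near such a puncture for generic $c$: the count vanishes when $C$ winds faster than $\tilde V$ and equals $m_z\wind_\infty^\Phi - \wind_{rel}^\Phi$ otherwise. For the other puncture classes at the $V_+$ end (negative punctures $z\in\Gamma_-$ with $\gamma_z\subset V_+$) the asymptotic cylinder $\R\times\gamma_z^{m_z}$ traverses the positive end of $\tilde V$, and an analogous local count produces the $\wind_{rel}^\Phi((C;z),\R\times V_+)-m_z\wind_{rel}^\Phi(\tilde V,\gamma_z)$ term.

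A symmetric analysis over the negative end $(-\infty,-R_0]\times M$ supplies the $V_-$-contributions in the first formula. The bulk contribution from the middle region is identified with $C\cdot(\R\times V_+)$ by homotoping $\tilde V$ within the positive end toward its cylindrical model $\R\times V_+$ using the exponential decay of $\eta_+$; the intersections that flow into the bulk under this homotopy are absorbed into the $C\cdot(\R\times V_+)$ count, while those that flow out of the end are exactly the asymptotic corrections already tabulated. The negative punctures $z\in\Gamma_-$ of $C$ then contribute $\sum_{z\in\Gamma_-} m_z(\R\times\gamma_z)\cdot\tilde V$ to the bulk count, as their asymptotic cylinders pass transversely through the compact middle of $\tilde V$. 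Summing these produces the first formula; the second is obtained by homotoping $\tilde V$ through the negative end toward $\R\times V_-$ instead.

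The main technical hurdle will be the precise end-count in the second paragraph: verifying that $\Phi\circ(u_N-\eta_+\circ u_T)$ has exactly the predicted number of zeros for generic $c$, and that as $c$ varies the zeros migrate between the end and the bulk in a controlled way consistent with the $\max$/$\min$ formulas. This follows the blueprint of \cite[Corollary~5.11]{Siefring:2011}, adapted to higher dimensions by relying on the fact that the eigenvector $e$ from Theorem~\ref{t:normal-asymptotics} is nowhere vanishing, so the winding of $u_N-\eta_+\circ u_T$ is governed by its leading term and is unaffected by the exponentially decaying remainder $r$. Nonnegativity of each grouped term in the formula is then built into the $\max-\wind_{rel}$ and $\wind_\infty-\min$ structure, together with the positivity of local intersection indices for holomorphic submanifolds of complementary dimensions.
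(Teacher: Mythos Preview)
The paper does not actually prove this theorem: it states explicitly that ``Proof will be given in \cite{Siefring:highdim}'' and only remarks that the result is the higher-dimensional analogue of \cite[Corollary~5.11]{Siefring:2011}. So there is no in-paper argument to compare your proposal against.

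That said, your outline is consistent with the strategy the paper alludes to, namely porting the proof of \cite[Corollary~5.11]{Siefring:2011} to the hypersurface setting using Theorem~\ref{t:normal-asymptotics} in place of the curve-vs-curve asymptotic formula. The decomposition into a compact middle piece and two cylindrical ends, the identification of end-intersections with zeros of $u_N-\eta_\pm\circ u_T$, and the appearance of the $\max/\min$ terms from comparing the asymptotic windings of $u_N$ and $\eta_\pm$ are all exactly what one expects from that blueprint.

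Where your write-up is still genuinely incomplete is precisely where you flag it. First, the claim that the zero count of $\Phi\circ(u_N-\eta_+\circ u_T)$ in the positive end equals $\max\{m_z\wind_\infty^\Phi,\wind_{rel}^\Phi\}-\wind_{rel}^\Phi$ requires an argument about which of the two exponentially decaying sections dominates: both $u_N$ and $\eta_+\circ u_T$ are governed by eigenvalues of $\A^N$, and the winding of their difference is that of the one with the \emph{slower} decay rate (larger eigenvalue), which in turn must be tied to the larger winding via the monotonicity of winding versus eigenvalue from \cite{HWZ:prop2}. You assert the outcome but do not supply this step. Second, your ``bulk'' identification is loose: homotoping $\tilde V$ to $\R\times V_+$ in the positive end while keeping track of how intersections migrate between bulk and end, and simultaneously accounting for the negative-puncture cylinders of $C$ passing through the middle of $\tilde V$, is the heart of the matter and needs a careful bookkeeping argument (in \cite{Siefring:2011} this is done via an explicit relative-intersection-number computation rather than an informal homotopy). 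As written, the proposal is a correct roadmap but not yet a proof.
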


The nonnegativity of the terms in the above formulas allows us to establish a
convenient set of conditions which will guarantee that the  projections 
$\pi(C)$ and $\pi(\tilde V)$ of the curve and hypersurface to $M$ do not intersect.
Indeed, if $\pi(C)$ and $\pi(\tilde V)$ are disjoint then
$C_{c}$ and $\tilde V$ are disjoint for all values of $c\in\R$ and hence
the algebraic intersection number of $C_{c}$ and $\tilde V$ is zero for all values of $c\in\R$.
Since the formulas from Theorem \ref{t:intersection-computation} compute this number (for all but a finite
number of values of $c\in\R$) in terms of nonnegative quantities, we can conclude that all terms in the above formulas vanish.
We thus obtain the following corollary, which generalizes \cite[Theorem 2.4/5.12]{Siefring:2011}.

\begin{corollary}\label{c:nonintersection-conditions}
Assume that all of the hypotheses of Theorem \ref{t:intersection-computation}
hold and that $\pi(C)$ is not contained in $\pi(\tilde V)$.
Then the following are equivalent:

\begin{enumerate}
\item $\pi(C)$ and $\pi(\tilde V)$ are disjoint.

\item  All of the following hold:
\begin{enumerate}
\item
None of the asymptotic limits of $C$ intersect $\pi(\tilde V)$.

\item
$\pi(C)$ does not intersect $V=V_{+}\cup V_{-}$.

\item
For any puncture $z$ at which $C$ has asymptotic limit $\gamma^{m}$ lying in $V=V_{+}\cup V_{-}$,
$\wind^{\Phi}_{rel}((C; z), V)=m \wind_{\infty}^{\Phi}(\tilde V, \gamma)$.
\end{enumerate}
\end{enumerate}
\end{corollary}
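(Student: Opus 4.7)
The strategy is to apply Theorem \ref{t:intersection-computation} and exploit that both of its formulas express $C_{c}\cdot\tilde V$ as a sum of \emph{nonnegative} summands. For the forward direction I will extract (a), (b), and (c) by forcing each nonnegative summand in each of the two formulas to vanish. For the backward direction the formulas will first show that $C_{c}\cdot\tilde V=0$ for all but finitely many $c$, and a hypothesized intersection point in $M$ will then be ruled out using positivity and stability of local intersection indices between holomorphic submanifolds.

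For $(1)\Rightarrow(2)$, observe first that if $\pi(C)\cap\pi(\tilde V)=\emptyset$, then any point $(x,p)\in C_{c}\cap\tilde V$ would give $p\in\pi(C)\cap\pi(\tilde V)$; hence $C_{c}\cap\tilde V=\emptyset$ and $C_{c}\cdot\tilde V=0$ for every $c\in\R$. Applying Theorem \ref{t:intersection-computation} at a value of $c$ where the formulas apply, every grouped nonnegative term in both formulas must then vanish. Combining $C\cdot(\R\times V_{+})=0$ from the first formula with $C\cdot(\R\times V_{-})=0$ from the second and invoking Corollary \ref{c:finite-intersections} gives $C\cap(\R\times V)=\emptyset$, i.e.\ $\pi(C)\cap V=\emptyset$, which is (b). The vanishing of $\sum_{z\in\Gamma_{-}}m_{z}(\R\times\gamma_{z})\cdot\tilde V$ and $\sum_{z\in\Gamma_{+}}m_{z}(\R\times\gamma_{z})\cdot\tilde V$, together with the nonnegativity of each individual orbit-cylinder intersection (again by Corollary \ref{c:finite-intersections}, applied to the holomorphic submanifolds $\R\times\gamma_{z}$), gives $(\R\times\gamma_{z})\cdot\tilde V=0$ and hence $\gamma_{z}\cap\pi(\tilde V)=\emptyset$ at every puncture of $C$, which is (a). Finally, at each puncture $z$ with $\gamma_{z}\subset V$, the remaining grouped terms appear in the two formulas as paired expressions of the form $\max\{A,B\}-B$ and $\max\{A,B\}-A$, or the $\min$-analogues, or as a pure difference $A-B$; their simultaneous vanishing in each case forces $A=B$, which unpacks to $\wind^{\Phi}_{rel}((C;z),V)=m_{z}\wind_{\infty}^{\Phi}(\tilde V,\gamma_{z})$, giving (c).

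For $(2)\Rightarrow(1)$, assume (a), (b), and (c); then every grouped term in Theorem \ref{t:intersection-computation} vanishes, so $C_{c}\cdot\tilde V=0$ for all but finitely many $c\in\R$. Suppose for contradiction that there exists $p\in\pi(C)\cap\pi(\tilde V)$; choose $(a,p)\in C$ and $(b,p)\in\tilde V$, and set $c_{0}=b-a$, so that $(b,p)\in C_{c_{0}}\cap\tilde V$. The hypothesis $\pi(C)\not\subset\pi(\tilde V)$ implies $C_{c_{0}}\not\subset\tilde V$, so by Corollary \ref{c:finite-intersections} the intersection at $(b,p)$ is isolated and has positive local intersection index. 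Stability of such local indices under small $C^{\infty}$-perturbations of $C$ then implies $C_{c}\cdot\tilde V\geq 1$ for every $c$ in an open neighborhood of $c_{0}$, contradicting the finiteness of the exceptional set.

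The main technical subtlety is the stability step in the backward direction: one must rule out the possibility that the positive local intersection at $(b,p)$ is cancelled by intersections escaping to, or arriving from, infinity under an arbitrarily small variation of $c$. This is supplied by Theorem \ref{t:normal-asymptotics}, whose asymptotic formula confines all intersections of $C_{c}$ with $\tilde V$ to a compact region of $\R\times M$ that varies continuously in $c$; on that compact region the standard conservation-of-intersection-number argument for small perturbations applies, completing the proof.
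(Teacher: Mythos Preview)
Your argument is correct and, for the direction $(1)\Rightarrow(2)$, follows exactly the paper's approach: the paper observes that disjointness of the projections forces $C_c\cdot\tilde V=0$ for all $c$, and then reads off the vanishing of each nonnegative grouped term in the two formulas of Theorem~\ref{t:intersection-computation}. Your unpacking of how the paired $\max$/$\min$ terms and the pure difference terms combine to give (c) is more explicit than what the paper writes, but the content is the same.

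For $(2)\Rightarrow(1)$ the paper gives no argument at all (it simply cites \cite[Theorem 2.4/5.12]{Siefring:2011}), so your contradiction argument via local stability of a positive isolated intersection is a genuine addition. One remark: your final paragraph about intersections escaping to infinity is unnecessary. The inequality $C_c\cdot\tilde V\ge 1$ for $c$ near $c_0$ follows from a purely local argument---choose a small ball $B$ around the intersection point with $C_{c_0}\cap\tilde V\cap\partial B=\emptyset$, observe that this persists for nearby $c$, and use homotopy invariance of the local intersection index inside $B$ together with positivity of all other contributions (Corollary~\ref{c:finite-intersections}). No global compactness or control at the ends is needed, since positivity already rules out cancellation and Corollary~\ref{c:finite-intersections} already guarantees finiteness of $C_c\cdot\tilde V$ for every $c$.
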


\section{Proof of the main result}\label{s:main-proof}
We now proceed with the proof of our main result, Theorem \ref{t:main}.
We consider a manifold $M^{2n+1}$ equipped with a nondegenerate stable Hamiltonian structure
$(\lambda, \omega)$
and
an $f$-admissible fibration $(\Sigma, p, Y, f)$.
Recall that this means that
$p:M\to\Sigma$ is a smooth fibration over a closed surface with fiber diffeomorphic to $Y^{2n-1}$,
and $V:=p^{-1}(\crit(f))$ is a strong stable Hamiltonian hypersurface.
We further recall that each critical point $w$ of $f$ is assigned a sign $\sign(w)\in\br{-1, +1}$
according to whether $p_{*}:\xi_{Y_{w}}^{\perp}\to T_{w}\Sigma$ is everywhere orientation preserving or reversing.

We will let $\tl J$ denote a compatible almost complex structure on $\R\times M$ 
and will assume that $\R\times V$ has $\tl J$-invariant tangent space.
We consider an asymptotically cylindrical, $\tl J$-holomorphic map 
$\tl u=(a, u):S\setminus\Gamma\to\R\times M$ so that all asymptotic limits
$\gamma_{z}^{m_{z}}$ of $\tl u$ are contained in the binding set
$V=p^{-1}(\crit(f))$ of the foliation.
As observed in the introduction, the projected map
$v:=p\circ u:S\setminus\Gamma\to\Sigma$ admits a continuous extension
$\bar v:S\to\Sigma$ over the punctures.
The following lemma computes the degree of this map in terms of the 
intersection number and relative normal windings of $\tl u$ and any given component 
of the binding set of the foliation.

\begin{lemma}\label{l:degree-computation}
Assume the map $u:S\setminus\Gamma\to M$ does not have image
contained in $V=p^{-1}(\crit(f))$.  Then,
given a point $w\in\crit(f)$, the degree of the map
$\bar v$ defined above is given by the formula
\begin{align*}
\deg(\bar v)
&=
\sign(w)
\bp{
\tilde u\cdot (\R\times Y_{w})
-\sum_{\stackrel{z\in\Gamma^{+}}{\gamma_{z}\subset Y_{w}}}\wind^{\Phi}_{rel}((\tilde u; z), Y_{w})
+\sum_{\stackrel{z\in\Gamma^{-}}{\gamma_{z}\subset Y_{w}}}\wind^{\Phi}_{rel}((\tilde u; z), Y_{w})
} \\
&=
\sign(w)
\bp{
\tilde u*(\R\times Y_{w})
-\sum_{\stackrel{z\in\Gamma^{+}}{\gamma_{z}\subset Y_{w}}}\fl{\mu_{N}^{\Phi}(\gamma_{z}^{m_{z}})/2}
+\sum_{\stackrel{z\in\Gamma^{-}}{\gamma_{z}\subset Y_{w}}}\ceil{\mu_{N}^{\Phi}(\gamma_{z}^{m_{z}})/2}
}
\end{align*}
and thus satisfies
\begin{equation}\label{e:degree-inequality}
\begin{aligned}
\sign(w)\deg(\bar v)
&\ge
-\sum_{\stackrel{z\in\Gamma^{+}}{\gamma_{z}\subset Y_{w}}}\fl{\mu_{N}^{\Phi}(\gamma_{z}^{m_{z}})/2}
+\sum_{\stackrel{z\in\Gamma^{-}}{\gamma_{z}\subset Y_{w}}}\ceil{\mu_{N}^{\Phi}(\gamma_{z}^{m_{z}})/2} \\
&=
\sum_{\stackrel{z\in\Gamma^{\pm}}{\gamma_{z}\subset Y_{w}}}\mp\alpha_{N}^{\Phi; \mp}(\gamma_{z}^{m_{z}})
\end{aligned}
\end{equation}
with equality occurring if and only if $u$ does not intersect
$Y_{w}$ and
$\wind_{rel}^{\Phi}((\tilde u; z), Y_{w})=\alpha_{N}^{\Phi; \mp}(\gamma_{z}^{m_{z}})$ for
every $z\in\Gamma^{\pm}$ with $\gamma_{z}\subset Y_{w}$.
\end{lemma}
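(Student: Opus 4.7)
The plan is to compute $\deg(\bar v)$ directly by counting preimages of a regular value near $w$ and then to translate the bulk and asymptotic contributions into the intersection-theoretic language of Section~\ref{s:intersections}. Choose a regular value $w'\in \Sigma$ of $\bar v$ close to $w$ and away from $\crit(f)$. Since all asymptotic limits of $\tilde u$ lie in $V=p^{-1}(\crit(f))$, $\bar v$ sends $\Gamma$ into $\crit(f)$, so $\bar v^{-1}(w')\subset S\setminus \Gamma$ is a finite set of transverse preimages of $v=p\circ u$. Using that $p_{*}$ descends to an isomorphism $TM/TY_{w'}\to T_{w'}\Sigma$ whose sign agrees with $\sign(w)$ for $w'$ near $w$, the local-degree contribution of each $q\in\bar v^{-1}(w')$ is
$$\sign(\det d\bar v_q)=\sign(w)\cdot \iota_q(u,Y_{w'}),$$
where $\iota_q$ is the local transverse intersection sign of $u$ with the fiber $Y_{w'}$ at $q$.

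Next I would decompose $S=S_0\sqcup\bigsqcup_{z\in\Gamma}D_z$ into small closed disks $D_z$ around the punctures and the complementary surface $S_0$. By Corollary~\ref{c:finite-intersections} applied to $\tilde V=\R\times Y_w$ we may shrink the $D_z$ so that $S_0$ contains every point of $u^{-1}(Y_w)$, and then homotopy invariance of the intersection number in a compact region (through the family $\R\times Y_{w_t}$ joining $\R\times Y_w$ to $\R\times Y_{w'}$ in a chart where $p$ is trivialized) yields a bulk contribution $\#_{\mathrm{alg}}\bar v^{-1}(w')\cap S_0=\sign(w)\cdot\tilde u\cdot(\R\times Y_w)$. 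For each $D_z$, elementary degree theory gives $\#_{\mathrm{alg}}\bar v^{-1}(w')\cap D_z=\wind(v|_{\partial D_z},w')$, which vanishes unless $\gamma_z\subset Y_w$. For such a $z$, Theorem~\ref{t:normal-asymptotics} writes $\tilde u$ near $z$ as a perturbation of the orbit cylinder whose leading normal term is an eigenvector $e(t)$ of $\A_{\gamma_z^{m_z}}^{N}$ with winding $\wind_{rel}^{\Phi}((\tilde u;z),\R\times Y_w)$. Applying $p$ and linearizing gives $v(s,t)\approx w+e^{\mu s}\,p_{*}(e(t))$, and since $p_{*}\Phi^{-1}:\R^2\to T_w\Sigma$ has orientation sign $\sign(w)$, the loop $v|_{\partial D_z}$ winds around $w$ by $\sign(w)\cdot\wind_{rel}^{\Phi}$ when $t$ traverses $S^1$ positively. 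Under the standard identification of a neighborhood of a positive (resp.\ negative) puncture with a half-cylinder, the counterclockwise boundary of $D_z$ corresponds to the $-t$ (resp.\ $+t$) direction, producing a contribution of $\mp\sign(w)\wind_{rel}^{\Phi}$ at $z\in\Gamma^{\pm}$ with $\gamma_z\subset Y_w$. Summing gives the first equality.

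The second equality is then obtained by substituting the formula $i^{\Phi}(\tilde u,\R\times Y_w)=\tilde u\cdot(\R\times Y_w)-\sum_{z\in\Gamma^+,\,\gamma_z\subset Y_w}\wind_{rel}^{\Phi}+\sum_{z\in\Gamma^-,\,\gamma_z\subset Y_w}\wind_{rel}^{\Phi}$ stated after Theorem~\ref{t:intersection-positivity} into the definition of the holomorphic intersection product $\tilde u*(\R\times Y_w)$. The inequality \eqref{e:degree-inequality} is then immediate from the nonnegativity $\tilde u*(\R\times Y_w)\ge 0$ in Theorem~\ref{t:intersection-positivity}, and the conversion of $-\fl{\mu_N^{\Phi}/2}+\ceil{\mu_N^{\Phi}/2}$ into $\sum_{\Gamma^\pm}\mp\alpha_N^{\Phi;\mp}$ is immediate from \eqref{alphas}. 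Equality in \eqref{e:degree-inequality} corresponds to $\tilde u*(\R\times Y_w)=0$, which by Theorem~\ref{t:intersection-positivity} together with \eqref{e:local-asymp-inum-pos}--\eqref{e:local-asymp-inum-neg} is equivalent to $u$ missing $Y_w$ and $\wind_{rel}^{\Phi}((\tilde u;z),\R\times Y_w)=\alpha_N^{\Phi;\mp}(\gamma_z^{m_z})$ at every puncture $z\in\Gamma^\pm$ with $\gamma_z\subset Y_w$. The main technical point is the compact-region intersection comparison used in the bulk computation, which is handled by choosing $w'$ sufficiently close to $w$ and shrinking the $D_z$ once more so that $v(\partial S_0)$ stays disjoint from the small arc joining $w$ to $w'$ in $\Sigma$.
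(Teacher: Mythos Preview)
Your argument is correct and follows essentially the same strategy as the paper: decompose $\bar v^{-1}(w)$ into interior intersection points and punctures, identify the former with local intersection indices of $\tilde u$ with $\R\times Y_w$ (scaled by $\sign(w)$), and identify the latter with $\mp\sign(w)\wind_{rel}^{\Phi}$ using the orientation of cylindrical coordinates near a positive/negative puncture together with Theorem~\ref{t:normal-asymptotics}. The only difference is cosmetic: the paper computes local degrees directly at the (isolated) preimages of $w$ itself, whereas you perturb to a nearby regular value $w'$ and invoke a homotopy to pass from $Y_{w'}$ back to $Y_w$; this extra step is harmless but unnecessary, since Corollary~\ref{c:finite-intersections} already guarantees that $\bar v^{-1}(w)$ is finite and local degree is well-defined there.
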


\begin{proof}
Given a point $w\in\crit(f)$ it's clear from the definition of the map
$\bar v$ that points $z\in S$ with $\bar v(z)=w$ coincide with intersection points of the map
$\tilde u$ with $\R\times Y_{w}$ (or equivalently, intersection points of
$u$ with $Y_{w}$) and with punctures $z\in\Gamma$ for which the periodic orbit
$\gamma_{z}^{m_{z}}$ is contained in $Y_{w}$.
Since we've observed in Corollary \ref{c:finite-intersections} this number is finite,
we can compute the degree of the map
$\bar v:S\to\Sigma$ by summing the local degree of the map
at each point in $\bar v^{-1}(w)$.

Assume for the moment that $\sign(w)=1$.
Since in this case the identification of $\xi_{Y_{w}}^{\perp}$ with $T\Sigma_{w}$ via the map
$p_{*}$ is orientation preserving, it's clear that the local index of the map
$v=p\circ u$ agrees with the intersection number of $\tilde u$ with $\R\times Y_{w}$.
Summing over all such intersection points leads to the first term in the
given formula for the degree.
To compute the local degree at a positive puncture we
first choose positive holomorphic cylindrical coordinates
$(s, t)\in [0, \infty)\times S^{1}$ on a deleted neighborhood of the puncture
$z\in\Gamma^{+}$.
Because in such a coordinate system the loop $t\mapsto (s, t)$ for fixed $s$ encircles the puncture in the
clockwise direction, the local degree of the map
$\bar v$ at $z$ can be computed by identifying a neighborhood of $w$ with
$T_{w}\Sigma$ and computing
\[
-\wind\bar v(s, t)=-\wind (p\circ u)(s, t).
\]
But considering the asymptotic representation of the normal component of the map $\tilde u$ from
Theorem \ref{t:normal-asymptotics} along with the definition of the normal relative winding, 
we have that 
\[
\wind (p\circ u)(s, t)=\wind^{\Phi}_{rel}((\tilde u; z), \R\times Y_{w})
\]
which shows the local degree of the $\bar v$ at $z$ is
\[
-\wind^{\Phi}_{rel}((\tilde u; z), \R\times Y_{w}).
\]

At negative punctures, we argue similarly but instead choose negative
cylindrical coordinates $(s, t)\in (-\infty, 0]\times S^{1}$ on a deleted neighborhood of the puncture.
Since the loop $t\mapsto (s, t)$ encircles the puncture in the counterclockwise direction,
an argument analogous to that given above tells us that 
the local degree is now given by
\[
\wind\bar v(s, t)=\wind (p\circ u)(s, t)=\wind^{\Phi}_{rel}((\tilde u; z), \R\times Y_{w}).
\]
The first line of the
claimed formula for the degree of $\bar v$ now follows in the case that
$\sign(w)=1$.
The case when $\sign(w)=-1$ is identical with the exception of the fact that the map
$p_{*}:\xi_{Y_{w}}^{\perp}\to T_{w}\Sigma$ is now orientation reversing
which introduces a factor of $-1$ into each
of the computations.

The second line in the claimed formula for the degree of $\bar v$ now follows
from the definition of the holomorphic intersection product, the definition of the asymptotic intersection numbers
and Theorem \ref{t:intersection-positivity}

Finally, the inequality \eqref{e:degree-inequality} and the claim about when equality is achieved is
an immediate consequence of local positivity of intersections and the bounds
\eqref{e:wind-rel-pos} and \eqref{e:wind-rel-neg} or, equivalently,
as a consequence of
Theorem \ref{t:intersection-positivity}.
\end{proof}

Now, in addition to the assumptions preceding Lemma \ref{l:degree-computation} we assume that $M$ is equipped
with a holomorphic foliation $\F$ compatible with the $f$-admissible fibration $(\Sigma, p, Y, f)$.
Recall this means that $\F$ is an $\R$-invariant foliation of $\R\times M$ all of whose leaves have
$\tl J$-invariant tangent spaces and are diffeomorphic to $\R\times Y$.
Moreover the leaves of the foliation fixed by the $\R$-action are precisely those contained
in $\R\times V=\R\times p^{-1}(\crit(f))$ and all other leaves project via
$\R\times M\stackrel{\pi}{\to}M$ to embeddings smoothly foliating
$M\setminus V$, and the leaves of this foliation project under
$M\stackrel{p}{\to}\Sigma$ to flow lines of the gradient of $f$ with respect to some metric on
$\Sigma$.
We observe that the assumption that leaves of the foliation project to gradient flow lines implies that
with respect to a trivialization $\Phi$ of
$\xi_{V}^{\perp}$ in the preferred homotopy class determined by the fibration, the windings
$\wind^{\Phi}_{\infty}(\tilde V, \gamma)$,
defined by \eqref{e:V-gamma-wind-1} or  \eqref{e:V-gamma-wind-2}, 
vanish for  any leaf $\tilde V$
of the foliation not fixed by the $\R$-action
and any periodic orbit $\gamma$ lying in the one of the asymptotic limits of $\tilde V$.

We now proceed with the proof of our main theorem, Theorem \ref{t:main}.
We will continue to let $(S, j, \Gamma, \tl u=(a, u))$ denote
a punctured pseudoholomoprhic curve
with all punctures limiting to periodic orbits in $V=p^{-1}(\crit(f))$, and we 
let $\bar v:S\to\Sigma$ denote the continuous extension of the map
$p\circ u:S\setminus\Gamma\to\Sigma$.
We further assume that at each puncture $z\in\Gamma$ the normal Conley--Zehnder index of the
asymptotic limits $\gamma_{z}^{m_{z}}$ satisfies
\begin{equation}\label{e:cz-condition}
\mu_N^{\Phi}(\gamma_{z}^{m_{z}})\in\br{-1, 0, 1}
\end{equation}
with $\Phi$ still denoting a symplectic trivialization of $\xi_{V}^{\perp}$ in the preferred homotopy class determined by
the $f$-admissible fibration $(\Sigma, j, p, f)$.
We then claim that:
\begin{itemize}
\item If $\sign(w)=1$ for all $w\in\crit(f)$ then the map $\bar v$ has nonnegative degree and
has degree zero
precisely when the image of the map $\tilde u$ is contained in a leaf of the foliation $\F$.

\item If $\sign(w)=-1$ for all $w\in\crit(f)$ then the map $\bar v$ has nonpositive degree and
has degree zero
precisely when the image of the map $\tilde u$ is contained in a leaf of the foliation $\F$.

\item If the map $\sign:\crit(f)\to\br{-1, 1}$ is surjective then the map $\bar v$ has degree zero and
the image of the map $\tilde u$ is contained in a leaf of the foliation $\F$.
\end{itemize}

\begin{proof}[Proof of \ref{t:main}]
The condition \eqref{e:cz-condition} implies that
\[
\alpha_{N}^{\Phi; -}(\gamma_{z}^{m_{z}})=\fl{\mu_{N}^{\Phi}(\gamma_{z}^{m_{z}})/2}\le \fl{1/2}=0
\]
for all $z\in \Gamma^{+}$ and
\[
\alpha_{N}^{\Phi; +}(\gamma_{z}^{m_{z}})=\ceil{\mu_{N}^{\Phi}(\gamma_{z}^{m_{z}})/2}\ge \ceil{-1/2}=0
\]
for all $z\in\Gamma^{-1}$, which together are equivalent to
\begin{equation}\label{e:alpha-inequality}
\mp\alpha_{N}^{\Phi; \mp}(\gamma_{z}^{m_{z}})\ge 0
\end{equation}
for all $z\in\Gamma^{\pm}$.

Assuming that $\sign(w)=1$ for all $w\in\crit(f)$, using
\eqref{e:degree-inequality} from Lemma \ref{l:degree-computation}
with \eqref{e:alpha-inequality}
shows that $\deg\bar v\ge 0$ and that
$\deg\bar v=0$ precisely when
\begin{equation}\label{e:degree-zero-condition}
\begin{gathered}
\text{$u(\Sigma\setminus\Gamma)$ and $Y_{w}$ are disjoint for all $w\in\crit(f)$, and} \\
\text{$\wind_{rel}^{\Phi}((C; z), Y_{w_{z}})=\mp\alpha^{\Phi;\mp}(\gamma_{z}^{m_{z}})=0$ for all $z\in\Gamma^{\pm}$}.
\end{gathered}
\end{equation}
Similarly if  $\sign(w)=-1$ for all $w\in\crit(f)$, the same argument shows that
$\deg\bar v\le 0$ and that
$\deg\bar v=0$ precisely when \eqref{e:degree-zero-condition} holds.

In the third case that there exist points $w_{+}$, $w_{-}\in\crit(f)$ with
$\sign(w_{+})=1$ and $\sign(w_{-})=-1$, applying
\eqref{e:degree-inequality} and \eqref{e:alpha-inequality}
with $w=w_{+}$
yields $\deg(\bar v)\ge 0$
while applying 
\eqref{e:degree-inequality} and \eqref{e:alpha-inequality} with $w=w_{-}$ yields
$\deg(\bar v)\le 0$.  We conclude that
$\deg(\bar v)=0$ which once again happens precisely when
\eqref{e:degree-zero-condition} holds.

To complete the proof of all three cases, it remains to show that
$\deg(\bar v)=0$ precisely when the image of the map $\tilde u$ is contained in a leaf of the foliation
$\F$.
Assume that $\deg(\bar v)=0$.  Since $\F$ is a foliation, and we assume that the image of $u$ is not
contained in the binding $V=\crit(f)$, there exists at least one leaf $\tilde Y$ not fixed by the $\R$-action
for which $\tilde u$ intersects $\tilde Y$.
Assume $\tilde Y$ projects via $p\circ\pi$ to a gradient flow line between critical points
$w_{1}$ and $w_{2}$ so that $\tilde Y$ is asymptotically cylindrical over $Y_{w_{1}}\cup Y_{w_{2}}$,
and recall that we've noted above that with respect to the global trivialization $\Phi$ for
$\xi_{V}^{\perp}$ we've chosen,
$\wind_{\infty}^{\Phi}(\tilde Y, \gamma)=0$ for all periodic orbits $\gamma\in Y_{w_{1}}\cup Y_{w_{2}}$.
We would now like to apply Corollary \ref{c:nonintersection-conditions} above to prove
the image of $\tilde u$ is contained in $\tilde Y$.
Assume to the contrary that the image of $\tilde u$ is not contained in $\tilde Y$.
By the assumptions that all asymptotic limits of the map $\tilde u$ are periodic orbits
in the binding $V=p^{-1}(\crit(f))$ we know that none of these limits intersect
$\pi(\tilde Y)$ since $\tilde Y$ is assumed to project to an embedding in $M\setminus V$.
Moreover, since
we have already observed that in each case, $\deg(\bar v)=0$ is true precisely when
\eqref{e:degree-zero-condition} holds, we can conclude that
$u(S\setminus\Gamma)$ does not intersect the asymptotic limit set
$Y_{w_{1}}\cup Y_{w_{2}}$ and that for each $z\in\Gamma^{\pm}$ with
$\gamma_{z}^{m_{z}}\subset Y_{w_{1}}\cup Y_{w_{2}}$
we'll have that 
$\wind^{\Phi}_{rel}((u; z), Y_{w_{1}}\cup Y_{w_{2}})=0=m_{z} \wind_{\infty}^{\Phi}(\tilde Y, \gamma_{z})$.
Corollary \ref{c:nonintersection-conditions} now lets us conclude that the image of
$\tilde u$ is disjoint from $\tilde Y$ in contradiction to the assumption that they intersect.
We thus conclude that the image of $\tilde u$ is contained in $\tilde Y$ as desired.
\end{proof}

\bibliographystyle{../../hplain5}
\bibliography{../../bibdata-new}          

\begin{thebibliography}{10}

\bibitem{AbbasCieliebakHofer:2005}
Casim Abbas, Kai Cieliebak, and Helmut Hofer.
\newblock \href{http://dx.doi.org/10.4171/CMH/34} {The {W}einstein conjecture
  for planar contact structures in dimension three}.
\newblock {\em Comment. Math. Helv.}, 80(4):771--793, 2005,
  \href{http://www.ams.org/mathscinet-getitem?mr=2182700}{MR2182700},
  \href{http://zbmath.org/?q=an:1098.53063}{Zbl\,1098.53063},
  \href{http://arXiv.org/abs/math/0409355}{arXiv:math/0409355}.

\bibitem{Giroux:2002}
Emmanuel Giroux.
\newblock G\'eom\'etrie de contact: de la dimension trois vers les dimensions
  sup\'erieures.
\newblock In {\em Proceedings of the {I}nternational {C}ongress of
  {M}athematicians, {V}ol. {II} ({B}eijing, 2002)}, pages 405--414. Higher Ed.
  Press, Beijing, 2002,
  \href{http://www.ams.org/mathscinet-getitem?mr=1957051}{MR1957051},
  \href{http://zbmath.org/?q=an:1015.53049}{Zbl\,1015.53049},
  \href{http://arXiv.org/abs/math/0305129}{arXiv:math/0305129}.

\bibitem{HWZ:prop2}
H.~Hofer, K.~Wysocki, and E.~Zehnder.
\newblock \href{http://dx.doi.org/10.1007/BF01895669} {Properties of
  pseudo-holomorphic curves in symplectisations. {II}. {E}mbedding controls and
  algebraic invariants}.
\newblock {\em Geom. Funct. Anal.}, 5(2):270--328, 1995,
  \href{http://www.ams.org/mathscinet-getitem?mr=1334869}{MR1334869},
  \href{http://zbmath.org/?q=an:0845.57027}{Zbl\,0845.57027}.

\bibitem{IonelParker:2003}
Eleny-Nicoleta Ionel and Thomas~H. Parker.
\newblock \href{http://dx.doi.org/10.4007/annals.2003.157.45} {Relative
  {G}romov-{W}itten invariants}.
\newblock {\em Ann. of Math. (2)}, 157(1):45--96, 2003,
  \href{http://www.ams.org/mathscinet-getitem?mr=1954264}{MR1954264},
  \href{http://zbmath.org/?q=an:1039.53101}{Zbl\,1039.53101},
  \href{http://arXiv.org/abs/math/9907155}{arXiv:math/9907155}.

\bibitem{LisiVHMWendl}
Samuel Lisi, Jeremy Van Horn-Morris, and Chris Wendl.
\newblock On symplectic fillings of spinal open book decompositions I:
  Geometric constructions,
  \href{http://arXiv.org/abs/1810.12017}{arXiv:1810.12017}.

\bibitem{Moreno:torsion}
Agustin Moreno.
\newblock SFT computations and intersection theory in higher-dimensional
  contact manifolds.
\newblock In preparation.

\bibitem{Moreno:thesis}
Agustin Moreno.
\newblock {\em Algebraic {T}orsion in {H}igher-{D}imensional {C}ontact
  {M}anifolds}.
\newblock PhD thesis, Humboldt-Universit\"at zu Berlin, 2018,
  \href{http://arXiv.org/abs/1711.01562}{arXiv:1711.01562}.

\bibitem{Siefring:highdim}
Richard Siefring.
\newblock Symplectic field theory and stable {H}amiltonian submanifolds:
  Intersection theory.
\newblock In preparation.

\bibitem{Siefring:2008}
Richard Siefring.
\newblock \href{http://dx.doi.org/10.1002/cpa.20224} {Relative asymptotic
  behavior of pseudoholomorphic half-cylinders}.
\newblock {\em Comm. Pure Appl. Math.}, 61(12):1631--1684, 2008,
  \href{http://www.ams.org/mathscinet-getitem?mr=2456182}{MR2456182},
  \href{http://zbmath.org/?q=an:1158.53068}{Zbl\,1158.53068},
  \href{http://arXiv.org/abs/math/0702356}{arXiv:math/0702356}.

\bibitem{Siefring:2011}
Richard Siefring.
\newblock \href{http://dx.doi.org/10.2140/gt.2011.15.2351} {Intersection theory
  of punctured pseudoholomorphic curves}.
\newblock {\em Geom. Topol.}, 15(4):2351--2457, 2011,
  \href{http://www.ams.org/mathscinet-getitem?mr=2862160}{MR2862160},
  \href{http://zbmath.org/?q=an:1246.32028}{Zbl\,1246.32028},
  \href{http://arXiv.org/abs/0907.0470}{arXiv:0907.0470}.

\bibitem{Wendl:2010ob}
Chris Wendl.
\newblock \href{http://dx.doi.org/10.1016/j.exmath.2009.09.001} {Open book
  decompositions and stable {H}amiltonian structures}.
\newblock {\em Expo. Math.}, 28(2):187--199, 2010,
  \href{http://www.ams.org/mathscinet-getitem?mr=2671115}{MR2671115},
  \href{http://zbmath.org/?q=an:1198.32012}{Zbl\,1198.32012},
  \href{http://arXiv.org/abs/0808.3220}{arXiv:0808.3220}.

\end{thebibliography}
\end{document}